%% file: main.tex
\documentclass[11pt,a4paper,final]{article}

\input{preamble}

\usepackage{tikz}
\usetikzlibrary{arrows.meta,calc,decorations.pathreplacing,patterns}

\newcommand{\dd}{\,d}
\newcommand{\Om}{\Omega}
\newcommand{\Omst}{\Omega_*}
\newcommand{\one}{\mathbf 1}

\title{An improvement on B-spline basis condition number}
\author{Yimin Zhong~\thanks{Department of Mathematics and Statistics, Auburn University, Auburn, AL 36849; yimin.zhong@auburn.edu}}

\date{}

\begin{document}
\maketitle

\begin{abstract}
In this short note, we improve the upper bound on the condition number of the univariate B-spline basis of order $k$ from $k 2^k$ to $O(\sqrt{k}\log k\,2^k)$ for all $k\ge 2$. 
\end{abstract}

\begin{keywords}
    B-splines, condition number, de Boor conjecture, generating polynomial
\end{keywords}

\begin{AMS}
    41A15, 41A05, 65D07, 65D05, 30E20
\end{AMS}
\section{Introduction}

The stability of the B-spline basis is a classical problem in approximation theory.  Carl de Boor conjectured that the relevant B-spline basis condition number grows essentially like $2^k$ as the spline order $k$ increases; see \cite{deBoor1976Survey,deBoor1976Local,deBoor1978,deBoor1990} for discussions.  Earlier work by Lyche~\cite{Lyche1978} and by Scherer and Shadrin~\cite{SchererShadrin1996} established increasingly sharp exponential estimates. Later on, Scherer and Shadrin  proved an exciting upper bound
$$
        \kappa_{k,p}<k2^k,
$$
for all $k$ and all $1\le p\le\infty$ \cite{SchererShadrin1999}, which leaves only a linear factor away from the final conjecture.  Their proof uses Lee's identity \cite{Lee1996}, which yields a local interpolation formula, and finally estimates a binomial summation term-by-term with uniform bounds. 

The goal of this work is to improve Scherer and Shadrin's upper bound by estimating the same quantity. Instead of summing the absolute bounds with binomial weights, we show that the binomial sum has additional cancellation, leading to an improved estimate for the condition number 
$$\kappa_{k, p} < 3 \cdot 2^k k^{1/2}(3 + \log k) ,$$
which pushes the upper bound to almost the optimal rate mentioned in~\cite{SchererShadrin1999} for this route. The key observation is that the generating polynomial of the coefficients in Lee's identity admits an exact Cauchy integral representation, which can be transformed into a contour integral along the rays passing through the origin. On each ray, the integrand can be factored into unimodular factors, which allows us to exploit an extremely unbalanced setting in which all positive (or negative) interior knots are close to zero. Such exploitation eventually removes $O(\sqrt{k})$ from the condition number up to a logarithmic factor.

The subsequent sections of this note are organized as follows. We first introduce the previous result in~\cite{SchererShadrin1999} and the main theorem in Section~\ref{sec: main thm}. In Section~\ref{sec: pre}, we introduce a few preliminary results and explain their motivations. Then, we derive the main estimates in~\ref{sec: est}. Those estimates eventually constitute the proof in Section~\ref{sec: proof}. The concluding remarks are given in Section~\ref{sec: con}.

\section{Main theorem}\label{sec: main thm}
Let $\Delta = (t_0, t_1, \ldots, t_k)$ be the knot vector that 
$$
t_0\le t_1 \le \cdots \le t_k.
$$
For simplicity, we assume the knots are distinct, and repeated knots can be viewed as a conventional limiting process. If $f$ is a function, then $[\zeta_0,\ldots, \zeta_m] f$ denotes the divided difference of $f$ at the nodes $(\zeta_0,\ldots, \zeta_m)$.

We follow the same notations in~\cite{SchererShadrin1999} and define 
\begin{equation*}
\begin{aligned}
    \omega(x) &:= \frac{1}{(k-1)!}\prod_{j=1}^{k-1} (x - t_j),\\
    N(x) &:= ([t_0, \ldots, t_{k-1}] - [t_1,\ldots, t_k]) (\cdot - x)_{+}^{k-1}, 
\end{aligned}
\end{equation*}
where $(z)_{+} = \max(0, z)$ for $z\in\bbR$. The upper bound of the condition number $\kappa_{k, p}$ has been estimated in~\cite{SchererShadrin1999} using a useful identity from~\cite{Lee1996}.
\begin{theorem}[\cite{SchererShadrin1999}]\label{thm: SS}
The condition number $\kappa_{k,p}$, $p\ge 1$, satisfies
\begin{equation}\label{eq: bi-sum}
   \kappa_{k,p} \le \sup_{x\in (t_0, t_k)} \left| k \sum_{m=1}^k \binom{k}{m} N^{(m-1)}(x) \omega^{(k-m)}(x)\right|,
\end{equation}
and 
\begin{equation}\label{eq: bnd}
    \left| N^{(m-1)}(x) \omega^{(k-m)}(x) \right| \le 1, \qquad m = 1,\ldots, k.
\end{equation}
\end{theorem}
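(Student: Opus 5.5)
We will follow the route of Scherer and Shadrin, in two parts matching \eqref{eq: bi-sum} and \eqref{eq: bnd}.

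\emph{Part one: the representation \eqref{eq: bi-sum}.} Recall that $\kappa_{k,p}$ is controlled by the norms of the dual functionals. For the B-spline $M$ on the knots $\Delta$, we aim to construct, for each $x\in(t_0,t_k)$, a local dual functional $\lambda$ with $\lambda(N_j)=\delta_{0j}$, and then bound $\|\lambda\|$ uniformly. The natural candidate is the de Boor--Fix functional
\[
\lambda f=\sum_{r=0}^{k-1}(-1)^{k-1-r}\psi^{(k-1-r)}(x)f^{(r)}(x),\qquad \psi(y)=\frac{1}{(k-1)!}\prod_{j=1}^{k-1}(y-t_j)=\omega(y).
\]
Its norm on splines is not directly small, so we invoke Lee's identity \cite{Lee1996}. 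This identity rewrites the coefficient functional as an integral against the kernel
\[
\sum_m\binom{k}{m}N^{(m-1)}\omega^{(k-m)},
\]
the factor $\binom km$ arising from Leibniz' rule for $\bigl(N\omega\bigr)^{(k-1)}$-type expressions. Concretely, we will verify the identity
\[
\frac{d}{dx}\Bigl[\sum_{m}\binom{k}{m}\cdots\Bigr]
\]
by differentiating $N$, using $N^{(k)}=0$ away from the knots, together with $\deg\omega=k-1$. Combining this with Hölder's inequality and the normalization $\int M=1/k$ accounts for the prefactor $k$. This gives a bound on $\kappa_{k,p}$ that is independent of $p$, because the functional is a local point evaluation of a combination bounded in sup norm.

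\emph{Part two: the termwise bound \eqref{eq: bnd}.} Write $N=[t_0,\dots,t_{k-1}](\cdot-x)_+^{k-1}-[t_1,\dots,t_k](\cdot-x)_+^{k-1}$. Each term is $\frac{(k-1)!}{k}$ times a normalized B-spline (Peano kernel) in $x$. Differentiating $m-1$ times, $N^{(m-1)}$ is again a combination of divided differences of $(\cdot-x)_+^{k-m}$, up to the factor $(k-1)!/(k-m)!$. Similarly,
\[
\omega^{(k-m)}(x)=\frac{1}{(m-1)!}\sum_{|S|=m-1}\prod_{j\in S}(x-t_j).
\]
For $m=1$ or $m=k$ the bound follows from explicit formulas. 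In general, we use the Peano representation: $N^{(m-1)}(x)$ is a divided difference, over $k$ points, of $(\cdot-x)_+^{k-m}$, which equals an integral of a B-spline of order $m-1$ in the knots. We then bound $\prod_j|x-t_j|$ against the support size.

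The main obstacle is this product inequality. We would try to reduce it to the extremal configuration in which the knots cluster at $x$ or at the endpoints, and check the resulting one-dimensional inequality, which should come out as a ratio of binomials $\le1$. The key monotonicity step is to show that moving a knot away from $x$ changes both factors in compensating ways.
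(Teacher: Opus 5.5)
The paper does not prove this theorem: it quotes it from \cite{SchererShadrin1999}. Even its own divided-difference formula for $u_j=N^{(k-1-j)}\omega^{(j)}$ in Lemma~\ref{lem: U} is not used to rederive $|u_j|\le 1$. So your proposal has to stand on its own, and it has a genuine gap exactly where the content lies, namely \eqref{eq: bnd}.

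\textbf{Part two.} This is a plan, not an argument. The reduction to an extremal configuration and the ``monotonicity step'' are only hoped for, and they \emph{are} the inequality to be proved. Several preparatory facts are also wrong in ways that matter.
\begin{itemize}
\item Neither $[t_0,\dots,t_{k-1}](\cdot-x)_+^{k-1}$ nor $[t_1,\dots,t_k](\cdot-x)_+^{k-1}$ is a B-spline. Each equals $1$ to the left of its first node and $0$ to the right of its last; it is one minus the cumulative integral of an order-$(k-1)$ B-spline. Only the difference is compactly supported, and by the divided-difference recurrence
\[
N(x)=(t_0-t_k)\,[t_0,\dots,t_k](\cdot-x)_+^{k-1}=-N_{0,k}(x),
\]
minus the order-$k$ B-spline normalized to a partition of unity. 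This settles $m=1$ at once, since $\omega^{(k-1)}\equiv 1$.
\item The same identity shows that for $m\ge 2$, $N^{(m-1)}$ is a sign-changing derivative of a B-spline. It is not an integral of a lower-order B-spline.
\item Your formula for $\omega^{(k-m)}$ is too large by the factor $\binom{k-1}{m-1}$. The correct one is $\omega^{(k-m)}(x)=\frac{(k-m)!}{(k-1)!}\sum_{|S|=m-1}\prod_{j\in S}(x-t_j)$, and with constant $1$ in \eqref{eq: bnd} no factor of that size can be absorbed.
\end{itemize}
Most importantly, when knots lie on both sides of $x$, $\omega^{(k-m)}(x)$ is an elementary symmetric function of numbers of both signs, and $N^{(m-1)}(x)$ oscillates. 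Replacing them by $\prod_j|x-t_j|$ and a ``support size'' discards exactly this sign structure. You give no argument that the two separate bounds still multiply to at most $1$ at every $x$, nor which knot configuration is extremal. Even $m=k$ is not an explicit evaluation but the genuine inequality
\[
(t_k-t_0)\prod_{j=1}^{k-1}|x-t_j|\;\Bigl|\sum_{i:\,t_i>x}\ \prod_{0\le l\le k,\ l\ne i}(t_i-t_l)^{-1}\Bigr|\le 1 .
\]

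\textbf{Part one.} The idea is right but the mechanism is wrong. Leibniz applied to $(N\omega)^{(k-1)}$ produces $\binom{k-1}{m-1}$, not $\binom{k}{m}$. The kernel is $D^k(\omega H)$ with $H(x)=\int_{t_0}^x M$, where $M=-\frac{k}{t_k-t_0}N$ is the B-spline of unit integral. Since $\omega^{(k)}=0$, Leibniz gives $D^k(\omega H)=\sum_{m=1}^k\binom{k}{m} H^{(m)}\omega^{(k-m)}$ with $H^{(m)}=-\frac{k}{t_k-t_0}N^{(m-1)}$.

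Your placeholder identity should be the repeated integration by parts of $\int_{t_0}^{t_k}s\,D^k(\omega H)$ for a spline $s$ with interior knots $t_1,\dots,t_{k-1}$:
\begin{itemize}
\item the jump terms of $s^{(k-1)}$ vanish because $\omega(t_j)=0$;
\item the boundary terms at $t_0$ vanish because $H^{(i)}(t_0)=0$ for $i<k$;
\item at $t_k$, the conditions $H(t_k)=1$ and $H^{(i)}(t_k)=0$ for $1\le i<k$ leave the de Boor--Fix functional at $\tau=t_k^-$, i.e.\ the coefficient of $N_{0,k}$.
\end{itemize}
The fact $N^{(k)}=0$ plays no role.

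Rescaling to $M$ gives the dual function $\psi=\pm\sum_{m=1}^k\binom{k}{m} N^{(m-1)}\omega^{(k-m)}$ of $M$. This is an integral functional, not a point evaluation. Hölder's inequality, the $L_p$-normalization $(k/(t_{i+k}-t_i))^{1/p}N_{i,k}$ of the basis, and the fact that each point lies in at most $k$ supports then give $\kappa_{k,p}\le k\sup_\Delta\|\psi_\Delta\|_{L_\infty}$ for every $p$, which is \eqref{eq: bi-sum}. With these repairs part one goes through using only de Boor--Fix and integration by parts, but \eqref{eq: bnd} remains unproved.
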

A direct term-by-term application of the bound~\eqref{eq: bnd} in Theorem~\ref{thm: SS} immediately yields $\kappa_{k,p}\le k 2^{k}$. The main contribution of the current work is an improvement over the binomial summation~\eqref{eq: bi-sum} in Theorem~\ref{thm: SS} as follows.

\begin{theorem}\label{thm:main}
For $k\ge 2$ and any $p\ge 1$, the condition number satisfies
\begin{equation}\label{eq: new bnd}
    \kappa_{k,p} < 3 \cdot 2^k  k^{1/2} (3 + \log k).
\end{equation}
\end{theorem}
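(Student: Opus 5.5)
We start from the representation \eqref{eq: bi-sum} of Theorem~\ref{thm: SS}. Fix $x\in(t_0,t_k)$ and shift so that $x=0$. Set $a_m := N^{(m-1)}(0)\,\omega^{(k-m)}(0)$. The aim is to bound $\bigl|\sum_{m=1}^k \binom{k}{m} a_m\bigr|$ by $3\cdot 2^k k^{-1/2}(3+\log k)$; multiplying by the outer factor $k$ then gives \eqref{eq: new bnd}. We do not treat each $a_m$ separately. Instead we write the sum as a coefficient extraction from a generating polynomial. Since $\omega$ is a polynomial of degree $k-1$ with roots $t_1,\dots,t_{k-1}$, we have $\omega^{(k-m)}(0)/(k-m)! = e_{m-1}$-type elementary symmetric functions of the $-t_j$, up to normalisation. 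Similarly, $N^{(m-1)}(0)$ is a difference of two divided differences of $(\cdot)_+^{k-m}$, up to sign and factorials. By the Hermite--Genocchi or contour-integral formula for divided differences, each of these becomes $\frac{1}{2\pi i}\oint \frac{g(z)}{\prod_j (z-t_j)}\,dz$, with the truncated power handled by splitting the knots into positive and negative ones. The binomial weights combine with $z^{k-m}$ into the factor $(1+z)^k$ or $(1+1/z)^k$. The result is a single Cauchy integral
\[
\sum_{m=1}^k\binom{k}{m}a_m=\frac{1}{2\pi i}\oint_\Gamma \Phi(z)\,dz,\qquad \Phi(z)=\frac{c\,(1+z)^k\,\prod_{j=1}^{k-1}(z-t_j)\text{-type factors}}{z^{k}\,\prod_{t_j>0}(z-t_j)\cdots},
\]
whose exact form is to be fixed by matching coefficients. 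This is the first step. It is bookkeeping, and we check it against the trivial cases $k=2,3$.

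Next we deform $\Gamma$ onto two rays through the origin, $z=re^{\pm i\theta}$, chosen to avoid the real knots. On such a ray, each factor $(z-t_j)/(z - t_j')$ pairing a numerator knot with a denominator knot is a ratio of quantities of comparable modulus. We group these so that every factor is either unimodular or bounded by $1$, which is the mechanism behind \eqref{eq: bnd}. What remains is essentially $\int_0^\infty |1+re^{i\theta}|^k r^{-k}\cdots\,dr$, times a factor depending only on the configuration of positive and negative knots. The binomial sum $\sum\binom{k}{m}=2^k$ corresponds to evaluating $|1+z|^k$ at $z=1$. Tilting the ray to angle $\theta$ replaces $2^k$ by $|1+e^{i\theta}|^k=(2\cos(\theta/2))^k\approx 2^k e^{-k\theta^2/8}$. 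Integrating over a Gaussian window of width $\theta\sim k^{-1/2}$ then gains the factor $k^{-1/2}$. This is the cancellation we want to exploit.

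The main obstacle is uniformity in the knots. The unimodular factorisation only stays bounded if we can control the extremal configuration, in which all positive (or all negative) interior knots cluster at $0$. In that configuration the factors $(z-t_j)/z$ degenerate, and the integrand near $r\to0$ or $r\to\infty$ is no longer damped. We would first reduce to this extreme case by a monotonicity argument: each factor $|re^{i\theta}-t|/|re^{i\theta}|$, viewed as a function of $t$ on one side of $0$, is maximised at the endpoint. We would then estimate the extremal integral explicitly. The logarithmic loss $3+\log k$ should come from the radial integral $\int dr/r$ over the range $r\in[k^{-1},k]$, with the tails outside this range controlled by the decay of $(\cos(\theta/2))^k$. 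After that, the constants $3$ are a matter of careful but routine calculus. If the monotonicity reduction fails, the fallback is to split the sum \eqref{eq: bi-sum} at $|m-k/2|\le C\sqrt{k\log k}$: use \eqref{eq: bnd} on the tails, and use the contour representation only for the central block.
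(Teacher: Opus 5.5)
You have the right heuristics (generating polynomial, contours along lines through the origin, a $k^{-1/2}$ gain from the $L^1$ norm of $|1+e^{-i\theta}|^k$, a logarithmic loss from a radial integral), but both steps that carry the proof are missing.

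\textbf{The representation.} This is not bookkeeping, and the single integral you posit does not exist. Up to constants, $a_m$ is a divided difference of $(\cdot)_+^{k-m}$ times a Taylor coefficient of $\omega$. So $\sum_m\binom{k}{m}a_m$ is a Cauchy integral of $\sum_j\binom{k}{j}c_j(-\zeta)^j/\Om(\zeta)$ with $c_j=\Omst^{(j)}(0)/j!$. That numerator is a coefficientwise (Hadamard) product of $\Omst(-\zeta)$ with $(1+\zeta)^k$, not an ordinary product. Separating the binomial weights from the knot factors costs a second integral: $\sum_j\binom{k}{j}c_j(-\zeta)^j=\frac1{2\pi}\int_{-\pi}^{\pi}\Omst(-e^{i\theta}\zeta)(1+e^{-i\theta})^k\dd\theta$. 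This is the paper's structure. There is an outer identity \eqref{eq:binomial-Fourier}, $\Sigma_k=\frac1{2\pi}\int_{-\pi}^{\pi}U(e^{i\theta})(1+e^{-i\theta})^k\dd\theta$. Then, for each fixed $\theta$, $U(e^{i\theta})$ is a Cauchy integral over the $\theta$-dependent line $L_\theta$ on which $-e^{i\theta}\zeta=\overline{\zeta}$; this is what makes every interior factor unimodular. In a single contour integral the ray angle is a choice, not an integration variable, so your ``Gaussian window in $\theta$'' has nothing to act on. In the correct setup the gain is $\frac1{2\pi}\int_{-\pi}^{\pi}|1+e^{-i\theta}|^k\dd\theta\le 2^{k+1/2}/\sqrt{\pi k}$. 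It requires $|U(e^{i\theta})|=O(\log k)$ uniformly in the knots for $|\theta|\le\theta_0$, with the crude bound $|U|\le k$ elsewhere.

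\textbf{The uniform bound.} This is the real content, and your plan does not produce it. Bound the integrand on $L_\theta$ in modulus, even with all interior factors unimodular. You are left with $\frac{A+B}{2\pi}\int_{L_\theta}\frac{|\dd\zeta|}{|\zeta-\eta_0|\,|\zeta-\eta_k|}$, where $A=-\eta_0$ and $B=\eta_k$. This grows like $\log(B/A)$ and is unbounded as $x\to t_0^+$. The natural radial range is $[A,B]$, not $[k^{-1},k]$. The $\rho$-tails cannot be controlled by $\cos^k(\theta/2)$, which does not depend on $\rho$. No monotonicity reduction on moduli repairs this, because the bounded main term comes from cancellation. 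What is missing:
\begin{itemize}
\item[(i)] The exact one-sided evaluation of Lemma~\ref{lem: one-sided}: if all interior knots are $\ge 0$, then $|U(y)|=\prod_{j=1}^{k-1}|\eta_j+\eta_0 y|/(\eta_j-\eta_0)\le 1$, whatever $B/A$ is.
\item[(ii)] Phase extraction on the shorter side. For $A\le B$, replace $N_y(\zeta)$ by $(-y)^{|\cI_-|}$. This is exactly moving the negative interior knots to $0$, which yields a one-sided configuration. The error is at most $2\min\{1,|S_-|/(\delta_0^{1/2}|\zeta|)\}$, with $|S_-|\le kA$.
\end{itemize}
Only that error is estimated in modulus, and its extra decay truncates the logarithm at $\log(k/\sqrt{\delta_0})$ (Lemma~\ref{lem:two-interval}). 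Knots at $0$ are harmless here, each contributing the constant factor $-y$. So the configuration you call extremal is in fact the benign one.

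\textbf{The fallback.} It does not help. The block $|m-k/2|\le C\sqrt{k\log k}$ carries essentially all of the binomial mass $2^k$, so using \eqref{eq: bnd} there just returns $k2^k$. All the difficulty is cancellation inside that block, and you give no mechanism for it.
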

Instead of dealing with~\eqref{eq: bi-sum} term-by-term, we treat it in integral form using an auxiliary generating polynomial, reducing the problem to estimating Cauchy integrals. The leading constant $3$ in the theorem can be further optimized to approximately $\frac{4\sqrt{2}}{\pi^{3/2}}$ as $k\to\infty$ according to the proof in Section~\ref{sec: proof}, but we choose to keep the result in a much simpler form~\eqref{eq: new bnd}.

\section{Preliminaries}\label{sec: pre}
In the following, we introduce a few important preliminary steps. 
\subsection{Generating polynomial}
For a fixed point $x$, we introduce the generating polynomial
$$U(y; x) = \sum_{j=0}^{k-1}u_j(x) y^j,\qquad u_j(x) = N^{(k-1-j)}(x) \omega^{(j)}(x).$$
Then Theorem~\ref{thm: SS} implies $|u_j(x)| \le 1$ for all $j=0,\ldots, k-1$. The upper bound of $\kappa_{k,p}$ amounts to estimating the summation 
\begin{equation}\label{eq: bi sum sigma}
    \Sigma_k(x) = \sum_{j=0}^{k-1} \binom{k}{j} u_j(x).
\end{equation}
Set $\eta_j = t_j - x$, $j=0,\ldots, k$. Without loss of generality, we may assume $\eta_0 < 0 < \eta_k$. Then, we define the following auxiliary polynomials
$$
\Om(z):=\prod_{i=0}^{k}(z-\eta_i),
\qquad
\Omst(z):=\prod_{i=1}^{k-1}(z-\eta_i),  \qquad z\in \bbC.
$$
We first prove the following identity, using the same approach as in~\cite{Lee1996}. 
\begin{lemma}\label{lem: U}
$U(y; x)$ satisfies 
    \begin{equation}\label{eq:U-def}
        U(y; x):=-(\eta_k - \eta_0) \cdot [\eta_0,\ldots,\eta_k]
        \bigl(\Omst(-y z)\one_{z >0}\bigr),
\end{equation}
where the divided difference is applied to the $z$ variable.
\end{lemma}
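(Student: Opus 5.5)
The plan is a direct computation. We expand each coefficient $u_j(x)=N^{(k-1-j)}(x)\,\omega^{(j)}(x)$ in the shifted variable $z=t-x$. We write both factors in terms of the $\eta_i$, and then recognize the sum $\sum_j u_j y^j$ as a single divided difference of $\Omst(-yz)$ times a truncation. This is the same manipulation that produces Lee's identity in \cite{Lee1996}.

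\emph{Step 1 (the $\omega$ factor).} Since $x+z-t_j=z-\eta_j$, we have $\omega(x+z)=\Omst(z)/(k-1)!$. Taylor expansion in $z$ at $z=0$ then gives
\begin{equation*}
\frac{\omega^{(j)}(x)}{j!}=\frac{c_j}{(k-1)!},\qquad \Omst(z)=\sum_{j=0}^{k-1}c_j z^j .
\end{equation*}

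\emph{Step 2 (the $N$ factor).} We first use the recurrence for divided differences:
\begin{equation*}
[t_0,\ldots,t_{k-1}]-[t_1,\ldots,t_k]=-(t_k-t_0)\,[t_0,\ldots,t_k]=-(\eta_k-\eta_0)[t_0,\ldots,t_k].
\end{equation*}
Next we differentiate under the divided difference, which is legitimate because the knots are distinct and the functional acts in $t$. With $r=k-1-j$ this gives
\begin{equation*}
\partial_x^{r}(t-x)_+^{k-1}=(-1)^{r}\frac{(k-1)!}{j!}(t-x)_+^{j}.
\end{equation*}
Finally, shifting the nodes by $x$ turns $[t_0,\ldots,t_k]$ acting on $(t-x)_+^j$ into $[\eta_0,\ldots,\eta_k]$ acting on $z^j\one_{z>0}$. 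This yields
\begin{equation*}
N^{(k-1-j)}(x)=-(\eta_k-\eta_0)(-1)^{k-1-j}\frac{(k-1)!}{j!}\,[\eta_0,\ldots,\eta_k]\bigl(z^j\one_{z>0}\bigr).
\end{equation*}

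\emph{Step 3 (assembly).} Multiplying, the factors $(k-1)!$ and $j!$ cancel, so
\begin{equation*}
u_j(x)y^j=-(\eta_k-\eta_0)\,[\eta_0,\ldots,\eta_k]\bigl((-1)^{k-1-j}c_j(yz)^j\one_{z>0}\bigr).
\end{equation*}
Summing over $j$ and using linearity of the divided difference in $z$ should produce $\sum_j c_j(-yz)^j=\Omst(-yz)$, and hence \eqref{eq:U-def}.

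The main obstacle is sign bookkeeping. A naive count leaves the global factor $(-1)^{k-1}$ in front of $\sum_j c_j(-yz)^j$, rather than exactly $\Omst(-yz)$. I would resolve this by rechecking two conventions: the direction of the derivative $\partial_x$ versus $\partial_t$ acting on $(t-x)_+^{k-1}$, and the sign convention implicit in $N$ and in the leading coefficient of $\omega$. As a fallback, I would use that a divided difference over $k+1$ nodes annihilates polynomials of degree $\le k-1$. Then $\one_{z>0}$ may be traded for $-\one_{z<0}$, and $\Omst(-yz)$ may be rewritten via the reflection $z\mapsto -z$, to absorb any leftover parity factor. If a residual $(-1)^{k-1}$ survives even so, it is harmless for the later estimates, since only $|\Sigma_k(x)|=|U(\cdot;x)|$-type quantities enter, but I would aim to pin it down exactly so that \eqref{eq:U-def} holds verbatim.
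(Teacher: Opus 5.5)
Your Steps 1--3 are correct and follow essentially the paper's route. The paper packages $N$ through the leading coefficient $c(t)$ of $Q_{\delta_1}-Q_{\delta_2}$, which is just your recurrence $[t_0,\ldots,t_{k-1}]-[t_1,\ldots,t_k]=-(t_k-t_0)[t_0,\ldots,t_k]$.

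The unfinished step is the sign, and it cannot be finished: the residual $(-1)^{k-1}$ is genuine, and the identity as printed is false for even $k$. What your computation actually establishes is
\[
U(y;x)=(-1)^{k}(\eta_k-\eta_0)\,[\eta_0,\ldots,\eta_k]\bigl(\Omega_*(-yz)\,\mathbf{1}_{z>0}\bigr),
\]
which agrees with the statement only for odd $k$. For a concrete check, take $k=2$, $(t_0,t_1,t_2)=(-1,1,2)$ and $x=0$. Then $\omega(s)=s-1$ and $N(s)=-(1+s)/2$ for $|s|<1$. So $u_0=N'(0)\omega(0)=\tfrac12$, $u_1=N(0)\omega'(0)=-\tfrac12$, and $U(y)=\tfrac12(1-y)$. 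But $[-1,1,2]\bigl((-yz-1)\mathbf{1}_{z>0}\bigr)=\tfrac16(1-y)$, so the printed right-hand side equals $-\tfrac12(1-y)$.

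In particular, your fallback cannot succeed. Trading $\mathbf{1}_{z>0}$ for $-\mathbf{1}_{z<0}$ and reflecting $z\mapsto-z$ only rewrite the right-hand side as the same kind of expression for the reflected knots $-\eta_k<\cdots<-\eta_0$; in fact it is invariant under that reflection. So these manipulations can never produce a factor $(-1)^{k-1}$ on a fixed configuration.

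The paper's proof reaches the printed sign through two sign slips whose product is exactly your residual:
\begin{itemize}
\item It claims $\omega^{(j)}(x)=\frac{(-1)^j}{(k-1)!}\Omega_*^{(j)}(0)$, whereas $\omega(x+s)=\Omega_*(s)/(k-1)!$ gives no factor $(-1)^j$, as in your Step 1.
\item It equates $c^{(k-1-j)}(0)$ with $(\eta_0-\eta_k)[\eta_0,\ldots,\eta_k]\,z_+^j/j!$, dropping the factor $(-1)^{k-1-j}$ that comes from differentiating $(z-t)_+^{k-1}$ in $t$.
\end{itemize}
Your treatment of the $j=0$ term through $z^j\mathbf{1}_{z>0}$ is also more accurate than the paper's $\Omega_*(-y(z)_+)$, which literally equals $\Omega_*(0)$ rather than $0$ for $z<0$.

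As you observe, the global sign is harmless downstream. The representations \eqref{eq:U-contour} and \eqref{eq: U mod} inherit the same factor, and Lemma~\ref{lem: one-sided}, Proposition~\ref{prop:saddle-bound} and the final estimate use only $|U(e^{i\theta})|$. So the right way to close your argument is to state and prove the lemma with prefactor $(-1)^k(\eta_k-\eta_0)$, rather than chase the printed form.

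One small repair: differentiating under the divided difference is legitimate because $[t_0,\ldots,t_k]$ is a finite combination of point evaluations and each $x\mapsto(t_i-x)_+^{k-1}$ is smooth for $x\neq t_i$. So the identity should be read for $x$ not equal to a knot, since $N^{(k-1)}$ jumps at the knots.
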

\begin{proof}
 Let $Q_{\delta_1}$ and $Q_{\delta_2}$ be the interpolation polynomials in $z$ for $\phi_{k-1}(z; t) := \frac{1}{(k-1)!}(z - t)_{+}^{k-1}$ at nodes 
    $$\delta_1:= (\eta_0, \ldots, \eta_{k-1}),\qquad \delta_2 := (\eta_1,\ldots, \eta_k).$$
    Since $Q_{\delta_1}(z) = Q_{\delta_2}(z)$ at the nodes $(\eta_1,\ldots, \eta_{k-1})$,
    \begin{equation*}
        Q_{\delta_1}(z, t) - Q_{\delta_2}(z, t) = c(t) \Omega_{\ast}(z),
    \end{equation*}
    where the leading coefficient $c(t)$ is
    \begin{equation*}
        c(t) =  ([\eta_0,\ldots, \eta_{k-1}] - [\eta_1,\ldots, \eta_k]) \phi(\cdot; t) = (\eta_0 - \eta_k)[\eta_0 ,\ldots, \eta_k] \phi(\cdot; t).
    \end{equation*}
    Next, we compute each coefficient $u_j(x)$. By the shifting of variables, we have $$c^{(k-1-j)}(0) = \frac{1}{(k-1)!} N^{(k-1-j)}(x),\qquad \omega^{(j)}(x) = \frac{(-1)^j}{(k-1)!} \Omega_{\ast}^{(j)}(0).$$ 
    Therefore, 
    \begin{equation*}
    \begin{aligned}
        u_j(x) &= N^{(k-1 - j)}(x) \omega^{(j)}(x) = (-1)^j c^{(k-1-j)}(0) \Omega_{\ast}^{(j)}(0) \\
        &=   (\eta_0 - \eta_k)[\eta_0 ,\ldots, \eta_k] g_j(\cdot), 
    \end{aligned}
    \end{equation*}
    where $h_j(z) =  \frac{(-1)^j}{j!}\Omega_{\ast}^{(j)}(0) (z)_{+}^j$. Noticing that 
    \begin{equation*}
        \sum_{j=0}^{k-1} u_j(x) y^j = (\eta_0 - \eta_k)[\eta_0 ,\ldots, \eta_k] \left( \sum_{j=0}^{k-1} h_j(\cdot) y^j \right) = (\eta_0 - \eta_k)[\eta_0 ,\ldots, \eta_k] \Omega_{\ast}(-y(z)_{+}),
    \end{equation*}
    which is exactly~\eqref{eq:U-def} by writing $\Omega_{\ast}$ into a Taylor series.
\end{proof}
The Lemma~\ref{lem: U} also implies that we can always treat the evaluation point $x$ at the origin with the shifted knots. Hence, from now on, we just use $U(y)$ instead of $U(y;x)$.

\subsection{Cauchy integrals}
It is a standard technique to write the divided difference in terms of the Cauchy integral over the nodes.

\begin{lemma}\label{lem:contour}
Let $g$ be a polynomial such that $\deg(g)\le k-1$.  Let $\Gamma_+$ be a positively oriented contour enclosing \textit{precisely} the positive nodes among $(\eta_0,\ldots,\eta_k)$.  Then
\begin{equation}\label{eq:contour-divdiff}
        [\eta_0,\ldots,\eta_k](g(z)\one_{z>0})
        =\frac1{2\pi i}\int_{\Gamma_+}\frac{g(\zeta)}{\Om(\zeta)}\dd\zeta.
\end{equation}
Consequently,
\begin{equation}\label{eq:U-contour}
        U(y)=-\frac{\eta_k - \eta_0}{2\pi i}\int_{\Gamma_+}
        \frac{\Omst(-y\zeta)}{\Om(\zeta)}\dd\zeta.
\end{equation}
\end{lemma}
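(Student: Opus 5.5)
The identity \eqref{eq:contour-divdiff} follows from the classical partial-fraction (or residue) formula for divided differences applied to a function that vanishes at the negative nodes. Since the knots are assumed distinct, $\eta_0,\ldots,\eta_k$ are distinct. The explicit formula
\[
[\eta_0,\ldots,\eta_k]f=\sum_{i=0}^{k}\frac{f(\eta_i)}{\Om'(\eta_i)}
\]
holds for any function $f$ defined at the nodes. We apply it with $f(z)=g(z)\one_{z>0}$. All terms with $\eta_i\le 0$ vanish, so the divided difference equals $\sum_{\eta_i>0} g(\eta_i)/\Om'(\eta_i)$. If some node equals $0$ exactly, we adopt the convention $\one_{0>0}=0$; alternatively, we shift $x$ slightly, since $U$ is continuous in the data away from degenerate configurations.

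Next we identify this sum with the contour integral. The integrand $g(\zeta)/\Om(\zeta)$ is rational, and its only poles are simple poles at the nodes $\eta_i$, since $\Om$ has distinct roots and $g$ is a polynomial. The residue at $\eta_i$ is $g(\eta_i)/\Om'(\eta_i)$. The contour $\Gamma_+$ encloses precisely the positive nodes, so the residue theorem gives
\[
\frac1{2\pi i}\int_{\Gamma_+}\frac{g(\zeta)}{\Om(\zeta)}\dd\zeta=\sum_{\eta_i>0}\frac{g(\eta_i)}{\Om'(\eta_i)}.
\]
This is exactly \eqref{eq:contour-divdiff}. The hypothesis $\deg g\le k-1$ is not needed for this local residue computation. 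It matters only if one wants to deform $\Gamma_+$ or compare with the complementary contour around the negative nodes: because $\deg g\le \deg\Om-2$, the integral over a large circle vanishes, so the integral over $\Gamma_+$ also equals minus the integral over a contour around the nonpositive nodes. I would record this remark for later use.

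For the consequence \eqref{eq:U-contour}, fix $y$ and take $g(z)=\Omst(-yz)$. This is a polynomial in $z$ of degree at most $k-1$, since $\Omst$ has degree $k-1$. Lemma~\ref{lem: U} gives $U(y)=-(\eta_k-\eta_0)[\eta_0,\ldots,\eta_k]\bigl(\Omst(-yz)\one_{z>0}\bigr)$, and substituting \eqref{eq:contour-divdiff} yields \eqref{eq:U-contour}. The main point needing care is consistency with Lemma~\ref{lem: U}: there the function is written as $\Omst(-y(z)_+)$, which equals $\Omst(-yz)$ for $z>0$ but equals $\Omst(0)$ rather than $0$ for $z\le0$. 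The two versions therefore differ by the constant $\Omst(0)$ times $\one_{z\le 0}$, and $[\eta_0,\ldots,\eta_k]$ of a constant times an indicator is not automatically zero. I would resolve this by noting that the $j=0$ term $h_0$ in the proof of Lemma~\ref{lem: U} should be interpreted as $\Omst(0)(z)_+^0=\Omst(0)\one_{z>0}$, which is the standard convention for truncated powers of order zero. With that convention each $h_j$ vanishes for $z\le0$, so the sum is exactly $\Omst(-yz)\one_{z>0}$ and the statement of Lemma~\ref{lem: U} is correct as written. Checking this convention is the only subtle step; the rest is the residue theorem.
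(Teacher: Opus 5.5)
Your proof is correct, and it reaches \eqref{eq:contour-divdiff} by a more direct route than the paper.

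You expand the divided difference by the partial-fraction formula $\sum_i f(\eta_i)/\Omega'(\eta_i)$, which is available since the knots are distinct. The indicator $\one_{z>0}$ then kills the terms at nonpositive nodes, and you recognize what remains as the sum of residues of $g/\Omega$ inside $\Gamma_+$.

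The paper instead replaces $\one_{x>0}$ at the nodes by the Cauchy-kernel indicator $\chi_+(x)=\frac{1}{2\pi i}\int_{\Gamma_+}\frac{d\zeta}{\zeta-x}$ and interchanges the divided difference with the contour integral. It then evaluates $[\eta_0,\ldots,\eta_k]\bigl(g(x)/(\zeta-x)\bigr)$ by splitting off $g(\zeta)/(\zeta-x)$, whose divided difference is $g(\zeta)/\Omega(\zeta)$. The polynomial remainder $\bigl(g(x)-g(\zeta)\bigr)/(\zeta-x)$ is discarded via the degree bound.

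The paper's route avoids the explicit partial-fraction formula, so it carries over unchanged to coalescent knots away from the origin. Yours is shorter and exhibits the integral as an explicit finite sum. It also makes clear your correct observation that $\deg g\le k-1$ is not needed for \eqref{eq:contour-divdiff} itself, only for deforming the contour as in Lemma~\ref{lem: U line int}. The same holds in the paper's argument: the remainder's divided difference is a polynomial in $\zeta$ and integrates to zero over a closed contour whatever the degree.

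Your side remark on Lemma~\ref{lem: U} is also right. Read literally, $\Omega_*(-y(z)_+)$ equals $\Omega_*(0)$ for $z\le 0$, which changes the divided difference. The convention $(z)_+^0=\one_{z>0}$ in the $j=0$ term is what makes the stated form $\Omega_*(-yz)\one_{z>0}$ correct.
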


\begin{proof}
For any knot value $w=\eta_i$, we define
$$
        \chi_+(w)=\frac1{2\pi i}\int_{\Gamma_+}\frac{\dd\zeta}{\zeta-w}.
$$
Then $\chi_+(\eta_i)=1$ for $\eta_i > 0$ and $\chi_+(\eta_i)=0$ for $\eta_i < 0$.  Therefore the interpolation data of $g(x)\one_{x>0}$ agree with those of $g(x)\chi_+(x)$.  Hence
$$
 [\eta_0,\ldots,\eta_k](g(x)\one_{x>0})
 =\frac1{2\pi i}\int_{\Gamma_+}
 [\eta_0,\ldots,\eta_k]\left(\frac{g(x)}{\zeta-x}\right)\dd\zeta .
$$
For fixed $\zeta$, write
$$
        \frac{g(x)}{\zeta-x}
        =\frac{g(\zeta)}{\zeta-x}
        +\frac{g(x)-g(\zeta)}{\zeta-x}.
$$
The second term is a polynomial in $x$ of degree at most $k-2$, so its $k$-th divided difference is zero.  The first term has $k$-th divided difference $g(\zeta)/\Om(\zeta)$.  Taking $g(z)=\Omst(-yz)$ proves \eqref{eq:U-contour}.
\end{proof}

Let $y = e^{i\theta}$, $|\theta| < \pi$. Define
\begin{equation*}
    L_{\theta} := \{ r e^{ - i (\pi + \theta) / 2}\mid r\in\bbR \}.
\end{equation*}
For each $|\theta| < \pi$, $L_{\theta}$ separates the positive and negative real half-axes. 

\begin{lemma}\label{lem: rot conj}
    For each $\zeta\in L_{\theta}$, $-y \zeta = \overline{\zeta}$.
\end{lemma}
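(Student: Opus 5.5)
The identity is a direct computation in polar form, so the plan is simply to parametrize $L_\theta$ and compare arguments. Write a point of $L_\theta$ as $\zeta = r e^{-i(\pi+\theta)/2}$ with $r\in\bbR$. Since $r$ is real, conjugation gives $\overline{\zeta} = r e^{i(\pi+\theta)/2}$. With $y=e^{i\theta}$ we have $-y = e^{i\pi}e^{i\theta} = e^{i(\pi+\theta)}$. Multiplying,
$$
-y\zeta = e^{i(\pi+\theta)}\, r e^{-i(\pi+\theta)/2} = r e^{i(\pi+\theta)/2} = \overline{\zeta}.
$$
This holds for every real $r$, including negative $r$ and $r=0$, so it covers the whole line $L_\theta$ and not just a ray.

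Equivalently, the map $\zeta\mapsto -y\zeta$ is a rotation by the angle $\pi+\theta$. Conjugation reflects across the real axis. These two maps agree exactly on the line through the origin at angle $-(\pi+\theta)/2$, since rotating that line by $\pi+\theta$ lands it at angle $(\pi+\theta)/2$, which is its mirror image. I would record this geometric reading alongside the computation, since it explains why $L_\theta$ is chosen.

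There is no genuine obstacle. The only point of care is sign and branch consistency: the condition $|\theta|<\pi$ is not needed for the identity itself. It only ensures that $L_\theta$ is not the real axis, so that $L_\theta$ separates the positive and negative half-axes as stated before the lemma.

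The identity is what makes this line useful later. For $\zeta\in L_\theta$ it gives $\Omst(-y\zeta)=\Omst(\overline{\zeta}) = \overline{\Omst(\zeta)}$, because the $\eta_i$ are real. Each factor $(\overline{\zeta}-\eta_i)/(\zeta-\eta_i)$ then has modulus one, which is the unimodular factorization that the subsequent estimates rely on.
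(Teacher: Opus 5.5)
Your proof is correct and follows the paper's approach: both parametrize $\zeta = r e^{-i(\pi+\theta)/2}$ with $r\in\bbR$ and multiply by $-e^{i\theta}$ to get $r e^{i(\pi+\theta)/2}=\overline{\zeta}$. Your extra remarks are accurate but not needed for the statement: the identity holds without $|\theta|<\pi$, and the map $\zeta\mapsto -y\zeta$ is a rotation that agrees with reflection across the real axis on $L_\theta$.
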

\begin{proof}
    This is straightforward by checking 
    \begin{equation*}
        -e^{i\theta} \cdot r e^{-i(\pi + \theta)/2} = - r e^{-i \pi/2 + i \theta/2} = r e^{i (\pi + \theta)/2}. 
    \end{equation*}
\end{proof}
The next lemma replaces the contour in Lemma~\ref{lem:contour} with $L_{\theta}$. The orientation of $L_{\theta}$ (see~Figure~\ref{fig:rotated-contour}) is chosen so that $L_{\theta}$ is the (counterclockwise) boundary of the half-plane containing positive knots.
\begin{figure}[!htbp]
\centering
\begin{tikzpicture}[scale=0.95,>=Latex]
  \draw[->] (-4.2,0)--(4.2,0) node[right] {$\Re z$};
  \draw[->] (0,-3.2)--(0,3.2) node[above] {$\Im z$};
  \foreach \x in {-2.2,-1.3,-0.8} {\fill[blue] (\x,0) circle (2pt);}
  \foreach \x in {0.7,1.0,2.0} {\fill[red] (\x,0) circle (2pt);}
  \node[blue,below] at (-2.3,-0.15) {negative knots};
  \node[red,below] at (1.4,-0.15) {positive knots};
  \node[magenta,below] at (2.2,-2.2) {$\gamma_R$};
  \node[teal,below] at (0.6, 1.7) {$\theta/2$};
  \draw[teal] (0.75, 0.75) arc (45:90:1.05);
  \draw[teal] (0.707, 0.707) arc (45:90:1);
  \draw[thick,magenta] (-2.2,-2.2)--(2.2,2.2) node[above right] {$L_\theta$};
  \draw[magenta,->] (0,0) --  (1.0,1.0);

  \draw[purple, dashed] (-2,-2) arc (-135:45:2.828);
  \draw[purple,->] (2,-2)--(1.9, -2.1);
\end{tikzpicture}
\caption{Illustration of $L_{\theta}$.}
\label{fig:rotated-contour}
\end{figure}
\begin{lemma}\label{lem: U line int}
    For $|\theta| < \pi$, 
    \begin{equation}\label{eq: U con}
        U(e^{i\theta}) = -\frac{\eta_k - \eta_0}{2\pi i} \int_{L_{\theta}} \frac{\Omega_{\ast}(-e^{i\theta}\zeta)}{\Omega(\zeta)} d\zeta. 
    \end{equation}
    Equivalently, we can write the contour integral as
    \begin{equation}\label{eq: U mod}
    \begin{aligned}
            U(e^{i\theta}) &= -\frac{1}{2\pi i} \int_{L_{\theta}} \Phi_{\theta}(\zeta) \left(\frac{1}{\zeta - \eta_k} - \frac{1}{\zeta - \eta_0} \right) d\zeta \\
            &=\frac{-(\eta_k - \eta_0)}{2\pi i} \int_{L_{\theta}} \Phi_{\theta}(\zeta) \frac{1}{(\zeta - \eta_k)(\zeta - \eta_0)}   d\zeta,
    \end{aligned}
    \end{equation}
    where 
    \begin{equation*}
    \Phi_{\theta}(\zeta) = \prod_{j=1}^{k-1} \frac{-e^{i\theta}\zeta - \eta_j}{\zeta - \eta_j},    
    \end{equation*}
    and $ \left|\Phi_{\theta}(\zeta)\right| = 1$ for all $\zeta\in L_{\theta}.$
\end{lemma}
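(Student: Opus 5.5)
We deform the contour $\Gamma_+$ in \eqref{eq:U-contour} to the line $L_\theta$, then rewrite the integrand using the product form of $\Omega$ and $\Omst$.

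\textbf{Contour deformation.} Take $g(z)=\Omst(-e^{i\theta}z)$, a polynomial of degree $k-2$ in $z$. By Lemma~\ref{lem:contour},
\begin{equation*}
U(e^{i\theta})=-\frac{\eta_k-\eta_0}{2\pi i}\int_{\Gamma_+}\frac{g(\zeta)}{\Om(\zeta)}\dd\zeta .
\end{equation*}
We may take $\Gamma_+$ to be the boundary of the half-disc of radius $R$ lying on the side of $L_\theta$ that contains the positive knots: the segment of $L_\theta$ from $-R$ to $R$ in the parametrization $re^{-i(\pi+\theta)/2}$, closed by the arc $\gamma_R$ (Figure~\ref{fig:rotated-contour}). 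Because $|\theta|<\pi$, the line $L_\theta$ is not the real axis. Hence no knot lies on it, since all $\eta_i\neq 0$. It separates the positive knots from the negative ones, so for $R$ larger than all $|\eta_i|$ this closed curve encloses precisely the positive nodes.

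The integrand is $O(|\zeta|^{k-2}/|\zeta|^{k+1})=O(|\zeta|^{-3})$, so the arc contributes $O(R^{-2})\to 0$. Letting $R\to\infty$ gives \eqref{eq: U con}, with $L_\theta$ oriented as the counterclockwise boundary of that half-plane. The only care needed is the orientation: one checks that as $r$ increases, the half-plane containing $\Re z>0$ lies to the left of the direction $e^{-i(\pi+\theta)/2}$. If it lies to the right instead, one reverses the parametrization, which is the convention fixed in the figure.

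\textbf{Algebraic form.} Write $\Om(\zeta)=(\zeta-\eta_0)(\zeta-\eta_k)\prod_{j=1}^{k-1}(\zeta-\eta_j)$. Then
\begin{equation*}
\frac{\Omst(-e^{i\theta}\zeta)}{\Om(\zeta)}=\frac{\Phi_\theta(\zeta)}{(\zeta-\eta_k)(\zeta-\eta_0)},
\end{equation*}
which gives the second line of \eqref{eq: U mod}. The partial fraction identity
\begin{equation*}
\frac{\eta_k-\eta_0}{(\zeta-\eta_k)(\zeta-\eta_0)}=\frac{1}{\zeta-\eta_k}-\frac{1}{\zeta-\eta_0}
\end{equation*}
gives the first line.

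\textbf{Unimodularity.} By Lemma~\ref{lem: rot conj}, for $\zeta\in L_\theta$ we have $-e^{i\theta}\zeta=\overline{\zeta}$. Since each $\eta_j$ is real,
\begin{equation*}
\left|-e^{i\theta}\zeta-\eta_j\right|=\left|\overline{\zeta}-\eta_j\right|=\left|\overline{\zeta-\eta_j}\right|=|\zeta-\eta_j|.
\end{equation*}
Each factor of $\Phi_\theta$ therefore has modulus one, and so $|\Phi_\theta|=1$ on $L_\theta$.

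The main obstacle, though a mild one, is justifying the passage to the infinite line: the decay estimate on the arc and the consistent orientation. The remaining steps are routine algebra.
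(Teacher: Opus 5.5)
Your argument follows the paper's proof: close $L_\theta$ with the arc $\gamma_R$, let $R\to\infty$, factor $\Om$, and use Lemma~\ref{lem: rot conj} for unimodularity. There is one missed case and one harmless slip.

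\textbf{Missed case.} You assert that all $\eta_i\neq 0$, but only $\eta_0<0<\eta_k$ is given. An interior knot vanishes whenever $x$ coincides with some $t_j$. The paper uses this case later: the integer $Z$ in Proposition~\ref{prop:saddle-bound} counts knots at the origin. In that case $L_\theta$ passes through a node, so your sentence ``no knot lies on it'' fails and the deformation from $\Gamma_+$ is not immediate. The fix, as in the paper, is that the factor $(-e^{i\theta}\zeta-0)/(\zeta-0)=-e^{i\theta}$ cancels. So $\zeta=0$ is a removable singularity of the integrand, and the contour may pass through it.

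\textbf{Degree slip.} $\Omst$ has $k-1$ factors, so $g$ has degree $k-1$, not $k-2$. The integrand is therefore $O(|\zeta|^{-2})$ and the arc contributes $O(R^{-1})$. This still tends to zero, so the conclusion is unaffected.

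\textbf{Orientation.} Your hedge can be settled directly. The left normal $ie^{-i(\pi+\theta)/2}=e^{-i\theta/2}$ has positive real part for $|\theta|<\pi$. Hence increasing $r$ already traverses $L_\theta$ with the positive knots on the left.
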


\begin{proof}
Let us first assume $\eta_j \neq 0$ for all interior knots. 
Take the contour path that follows the semicircle path enclosed by $\gamma_R$ shown in Figure~\ref{fig:rotated-contour} and $L_{\theta}$. The radius $R$ is sufficiently large to include all positive knots. Since the integrand decays like $O(|\zeta|^{-2})$ on $\gamma_R$, we obtain~\eqref{eq: U con} by taking $R\to\infty$. The identity~\eqref{eq: U mod} follows by a direct factorization and $|\Phi_{\theta}(\zeta)| = 1$ on $L_{\theta}$ holds by 
\begin{equation}\label{eq: modu eq}
    |-y\zeta - \eta_j| = |\overline{\zeta} - \eta_j| = |\zeta - \eta_j|,
\end{equation}
from Lemma~\ref{lem: rot conj} and $\eta_j\in\bbR$.

If one or more knots $\eta_j = 0$, $1\le j\le k-1$, then $\zeta = 0$ is a removable singularity. The same claim in Lemma~\ref{lem: U line int} still holds.
\end{proof}

\subsection{Geometric estimates}
The next lemma treats the binomial summation~\eqref{eq: bi-sum} as an integral instead. The key point is that if $|\theta|$ is far from zero, the factor $|1+e^{-i\theta}|^k \ll 2^k$ as $k\to\infty$, which implies we only have to focus on a narrow window near $\theta = 0$. This observation leads to the main difference from~\cite{SchererShadrin1999}. Using Theorem~\ref{thm: SS}, we have the following bound for $U(e^{i\theta})$.
\begin{lemma}\label{lem: easy bound}
$|U(e^{i\theta})|\le k$.
\end{lemma}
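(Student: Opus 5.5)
This bound follows directly from the definition of the generating polynomial together with the coefficient bound \eqref{eq: bnd} of Theorem~\ref{thm: SS}; no contour argument is needed. By construction,
\begin{equation*}
U(e^{i\theta}) = \sum_{j=0}^{k-1} u_j(x)\, e^{ij\theta},\qquad u_j(x) = N^{(k-1-j)}(x)\,\omega^{(j)}(x).
\end{equation*}
Setting $m = k-j$ maps the indices $j=0,\ldots,k-1$ bijectively onto $m=1,\ldots,k$. Under this substitution $u_j(x) = N^{(m-1)}(x)\,\omega^{(k-m)}(x)$, so \eqref{eq: bnd} gives $|u_j(x)|\le 1$ for every $j$. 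This is the bound already recorded right after the definition of $U$.

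Next apply the triangle inequality and use $|e^{ij\theta}|=1$ for real $\theta$:
\begin{equation*}
|U(e^{i\theta})| \le \sum_{j=0}^{k-1} |u_j(x)| \le k .
\end{equation*}
This holds for every real $\theta$ and every evaluation point $x\in(t_0,t_k)$. It is therefore uniform in the shifted knots $\eta_j$, which is the form in which the lemma will be used later.

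The only step that needs care is the index bookkeeping: one must check that the derivative orders in $u_j$ match those in \eqref{eq: bnd} exactly, with $k-1-j = m-1$ and $j = k-m$. The representation of Lemma~\ref{lem: U line int} is not used here. Its role is presumably the complementary estimate near $\theta=0$, while this crude bound $k$ is intended for the range of $\theta$ where the weight $|1+e^{-i\theta}|^k$ is already small.
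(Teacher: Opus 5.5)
Your proposal is correct and matches the paper's own one-line proof. Each coefficient $u_j$ is bounded by one via \eqref{eq: bnd}, with the index bookkeeping $m=k-j$ exactly as you checked. The triangle inequality over the $k$ monomials on $|y|=1$ then gives $|U(e^{i\theta})|\le k$.
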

\begin{proof}
    Since each monomial term in $U(y)$ is bounded by one.
\end{proof}

\begin{lemma}\label{lem:binomial}
The generating polynomial $U(y)$ satisfies
\begin{equation}\label{eq:binomial-Fourier}
        \Sigma_k
        =\frac1{2\pi}\int_{-\pi}^{\pi}
        U(e^{i\theta})(1+e^{-i\theta})^k\dd\theta,
\end{equation}
where $\Sigma_k$ is the bionomial sum in~\eqref{eq: bi sum sigma}.
Moreover, for every $k\ge1$,
\begin{equation}\label{eq:binomial-L1-explicit}
        \frac1{2\pi}\int_{-\pi}^{\pi}|1+e^{-i\theta}|^k\dd\theta
        \le \frac{2^{k+1/2}}{\sqrt{\pi k}}.
\end{equation}
\end{lemma}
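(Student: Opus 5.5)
The identity \eqref{eq:binomial-Fourier} will come from orthogonality of the exponentials on $[-\pi,\pi]$. I will expand $(1+e^{-i\theta})^k=\sum_{m=0}^k\binom{k}{m}e^{-im\theta}$ and $U(e^{i\theta})=\sum_{j=0}^{k-1}u_j e^{ij\theta}$. Multiplying the two sums and integrating term by term, with $\frac1{2\pi}\int_{-\pi}^{\pi}e^{i(j-m)\theta}\dd\theta=\delta_{jm}$, the only surviving terms are $m=j$ for $0\le j\le k-1$. These give exactly $\sum_{j=0}^{k-1}\binom{k}{j}u_j=\Sigma_k$. Since both sums are finite, no convergence issues arise.

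For \eqref{eq:binomial-L1-explicit}, I will use $|1+e^{-i\theta}|=2|\cos(\theta/2)|$. This turns the left side into
$$\frac{2^k}{2\pi}\int_{-\pi}^{\pi}|\cos(\theta/2)|^k\dd\theta=\frac{2^k}{\pi}\int_{-\pi/2}^{\pi/2}\cos^k\phi\,\dd\phi=\frac{2^{k+1}}{\pi}W_k,$$
where $W_k=\int_0^{\pi/2}\cos^k\phi\,\dd\phi$ is the Wallis integral. The claim is therefore equivalent to $W_k\le \sqrt{\pi/(2k)}$.

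There are two ways to prove this bound on $W_k$.

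\emph{Gaussian comparison.} One has $\cos\phi\le e^{-\phi^2/2}$ on $[0,\pi/2]$. This holds because $h(\phi)=\log\cos\phi+\phi^2/2$ satisfies $h(0)=0$ and $h'(\phi)=\phi-\tan\phi\le0$. It follows that
$$W_k\le\int_0^\infty e^{-k\phi^2/2}\dd\phi=\frac12\sqrt{\frac{2\pi}{k}}=\sqrt{\frac{\pi}{2k}},$$
which is exactly what is needed.

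\emph{Wallis recurrence.} Alternatively, one can use $W_kW_{k+1}=\pi/(2(k+1))$ together with the monotonicity $W_{k+1}\le W_k$. This gives $W_k^2\le W_{k-1}W_k=\pi/(2k)$, the same bound.

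I will present the Gaussian comparison, since it is self-contained. Assembling the pieces gives
$$\frac{2^{k+1}}{\pi}\sqrt{\frac{\pi}{2k}}=\frac{2^{k+1/2}}{\sqrt{\pi k}}.$$
The only delicate point is the elementary inequality $\cos\phi\le e^{-\phi^2/2}$, or equivalently the verification of the Wallis recurrence. Neither is a real obstacle.
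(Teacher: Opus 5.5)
Your proof is correct. The identity part is the same constant-Fourier-coefficient argument the paper uses. The difference lies in how you bound the Wallis integral $W_k=\int_0^{\pi/2}\cos^k\phi\,d\phi$.

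The paper evaluates the integral exactly as a Beta integral, $2^k\Gamma(\frac{k+1}{2})/(\sqrt{\pi}\,\Gamma(\frac{k}{2}+1))$, and then quotes Gautschi's inequality $\Gamma(\frac{k+1}{2})/\Gamma(\frac{k}{2}+1)\le\sqrt{2/k}$. Its route buys the closed form of the integral, which reappears as $A_k$ in Section~\ref{sec: proof}.

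Your Gaussian comparison $\cos\phi\le e^{-\phi^2/2}$ needs no special functions and is fully self-contained. It is asymptotically sharp, since $\log\cos\phi=-\phi^2/2+O(\phi^4)$. It also makes visible why the weight $|1+e^{-i\theta}|^k$ concentrates in a window of width $O(k^{-1/2})$ around $\theta=0$, which is the heuristic behind the windowing in Section~\ref{sec: proof}. It would even bound the tail $A_k-B_k$ there directly by a Gaussian tail.

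Your Wallis-recurrence alternative is the paper's argument in elementary form rather than a third route. Because $W_k=\frac{\sqrt\pi}{2}\Gamma(\frac{k+1}{2})/\Gamma(\frac{k}{2}+1)$, the inequality $W_k^2\le W_{k-1}W_k=\frac{\pi}{2k}$ is exactly Gautschi's bound at $s=\frac12$, obtained by the usual monotonicity trick.
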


\begin{proof}
Expanding
$$
(1+e^{-i\theta})^k=\sum_{r=0}^{k}\binom{k}{r}e^{-ir\theta}
$$
and taking the constant Fourier coefficient leads to \eqref{eq:binomial-Fourier}.  Also,
$$
|1+e^{-i\theta}|^k=2^k|\cos(\theta/2)|^k.
$$
The Beta integral formula gives
$$
\frac1{2\pi}\int_{-\pi}^{\pi}|1+e^{-i\theta}|^k\dd\theta
=2^k\frac{\Gamma((k+1)/2)}{\sqrt\pi\,\Gamma(k/2+1)}.
$$
Then, the inequality~\eqref{eq:binomial-L1-explicit} is proved by using Gautschi's inequality:
$$
\frac{\Gamma((k+1)/2)}{\Gamma(k/2+1)}\le \sqrt{\frac{2}{k}}.
$$
\end{proof}
Now, we select a threshold $\theta_0 > 0$ which will be determined in Section~\ref{sec: proof}. The next lemma provides a tool that will later be used to obtain quantitative bounds for the contour integral.
\begin{lemma}\label{lem:distance}
For $|\theta|\le\theta_0$, $\zeta=\rho e^{-i(\pi+\theta)/2}\in L_{\theta}$, and any real $x$, 
\begin{equation}\label{eq:distance-explicit}
        |\zeta - x|\ge \delta_0^{1/2} (\rho^2+x^2)^{1/2}.
\end{equation}
where $\delta_0 := 1 - \sin(\theta_0/2)$.
\end{lemma}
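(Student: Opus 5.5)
Expanding the modulus directly should give the inequality. Write $\zeta=\rho e^{-i\alpha}$ with $\alpha=(\pi+\theta)/2$. Then
\begin{equation*}
|\zeta-x|^2=\rho^2+x^2-2\rho x\cos\alpha ,
\end{equation*}
and $\cos\alpha=\cos\bigl((\pi+\theta)/2\bigr)=-\sin(\theta/2)$. This gives
\begin{equation*}
|\zeta-x|^2=\rho^2+x^2+2\rho x\sin(\theta/2).
\end{equation*}
Since $\rho$ ranges over all of $\mathbb{R}$ on $L_\theta$ and $x$ is an arbitrary real number, the cross term $2\rho x\sin(\theta/2)$ may take either sign. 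Its absolute value is at most $|\sin(\theta/2)|\,(\rho^2+x^2)$, by $2|\rho x|\le\rho^2+x^2$.

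Hence $|\zeta-x|^2\ge(1-|\sin(\theta/2)|)(\rho^2+x^2)$. For $|\theta|\le\theta_0$ (and $\theta_0<\pi$) we have $|\theta/2|\le\theta_0/2<\pi/2$. Because $\sin$ is increasing on $[0,\pi/2]$, this gives $|\sin(\theta/2)|\le\sin(\theta_0/2)$, so $1-|\sin(\theta/2)|\ge\delta_0$. Taking square roots yields \eqref{eq:distance-explicit}.

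There is no real obstacle here. The only points to watch are the sign in $\cos\alpha$, which does not matter once absolute values are taken, and the fact that $\theta_0$ must be smaller than $\pi$ so that $\delta_0>0$ and the monotonicity of $\sin$ applies.
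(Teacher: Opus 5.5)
Your proof is correct and follows essentially the same route as the paper. Like the paper, you expand $|\zeta-x|^2=\rho^2+x^2-2\rho x\cos((\pi+\theta)/2)$, use $\cos((\pi+\theta)/2)=-\sin(\theta/2)$, bound the cross term via $2|\rho x|\le\rho^2+x^2$, and use $|\sin(\theta/2)|\le\sin(\theta_0/2)$. Your explicit remark that $\theta_0<\pi$ is needed for the monotonicity step is a small detail the paper leaves implicit; the paper itself takes $\theta_0=\pi/3$.
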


\begin{proof}
For a real number $x$,
\begin{equation}\label{eq: cosine id}
     |\rho e^{-i(\pi+\theta)/2}-x|^2
 =\rho^2+x^2-2\rho x\cos\left(\frac{\pi+\theta}{2}\right).
\end{equation}
Since $\cos((\pi+\theta)/2)=-\sin(\theta/2)$ and $|\theta|\le\theta_0$,
$$
        \left|\cos\left(\frac{\pi+\theta}{2}\right)\right|
        \le \sin(\theta_0/2).
$$
Using $2|\rho x|\le \rho^2+x^2$, the right-side of~\eqref{eq: cosine id} is at least
$$
        (1-\sin(\theta_0/2))(\rho^2+x^2) = \delta_0 (\rho^2 + x^2).
$$
Thus $|\rho e^{-i(\pi+\theta)/2}-x|\ge \delta_0^{1/2}(\rho^2+x^2)^{1/2}$.
\end{proof}

\subsection{Phase extraction}
In this section, we provide an estimate of the product form as $\Phi_{\theta}$ as $|\zeta|$ grows large through phase extraction.
\begin{lemma}\label{lem: tri ineq}
Let $w_i = e^{i\alpha_i}$, $\alpha_i\in\bbR$, $i=1,2,3$, then 
\begin{equation*}
    |w_1 - 1| + |w_2 - 1| \ge |w_1 w_2 - 1|.
\end{equation*}
\end{lemma}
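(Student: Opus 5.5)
The plan is to write $w_1w_2 - 1 = w_1(w_2 - 1) + (w_1 - 1)$ and then use the triangle inequality together with unimodularity. This gives
\begin{equation*}
    |w_1 w_2 - 1| \le |w_1|\,|w_2 - 1| + |w_1 - 1| = |w_2 - 1| + |w_1 - 1|,
\end{equation*}
since $|w_1| = |e^{i\alpha_1}| = 1$ for real $\alpha_1$. That is exactly the claimed inequality. The third index $w_3$ in the statement plays no role in this two-factor inequality.

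The iterated version is presumably what the phase-extraction argument will actually use. I would record it by induction: for unimodular $w_1,\ldots,w_n$,
\begin{equation*}
    \Bigl|\prod_{i=1}^n w_i - 1\Bigr| \le \sum_{i=1}^n |w_i - 1|.
\end{equation*}
The induction step applies the two-factor case with $w_1$ replaced by $\prod_{i<n} w_i$, which is still unimodular.

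Nothing here is a real obstacle. The only point that needs care is invoking $|w_1| = 1$, which is what makes the identity-plus-triangle-inequality step sharp enough. No earlier results from the paper are needed.
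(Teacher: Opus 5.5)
Your proof is correct and is the argument the paper intends: the paper's proof just says ``this is exactly the triangle inequality,'' and your decomposition $w_1w_2-1 = w_1(w_2-1)+(w_1-1)$, combined with $|w_1|=1$, is the explicit form of that one-liner. Your iterated version is exactly how the lemma is used in Lemma~\ref{lem:phase-extraction}, where the unimodular factors $G_j(\zeta)$ are combined one at a time.
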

\begin{proof}
    This is exactly the triangle inequality on the complex plane.
\end{proof}

\begin{lemma}\label{lem:phase-extraction}
Let $\cI_{-}:= \{j \mid \eta_j < 0, \; 1\le j\le k-1\}$ be the index set of all negative interior knots, and set $$S_{-} = \sum_{j\in \cI_{-}} \eta_j.$$  For $|y|=1$, $|\theta| < \theta_0$, and $\zeta\in L_\theta$, define
$$
N_y(\zeta):=\prod_{j\in\cI_{-}}\frac{-y\zeta - \eta_j}{\zeta - \eta_j}.
$$
Then, 
\begin{equation}\label{eq:phase-extract-bound}
\begin{aligned}
|N_y(\zeta)-(-y)^{|\cI_{-}|}|
\le 2 \min\left\{1,\frac{|S_-|}{\delta_0^{1/2}|\zeta|}\right\}.
\end{aligned}
\end{equation}
\end{lemma}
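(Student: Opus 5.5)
We prove the two bounds in \eqref{eq:phase-extract-bound} separately. The constant $2$ is the easy part. By \eqref{eq: modu eq}, each factor $w_j := \frac{-y\zeta-\eta_j}{\zeta-\eta_j}$ with $j\in\cI_-$ is unimodular for $\zeta\in L_\theta$, so $|N_y(\zeta)|=1$. Since $|(-y)^{|\cI_-|}|=1$ as well, the triangle inequality gives $|N_y(\zeta)-(-y)^{|\cI_-|}|\le 2$.

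For the decaying bound, we normalize each factor by its limiting phase. Write
\[
w_j = -y\,v_j,\qquad v_j := \frac{\zeta+\eta_j/y}{\zeta-\eta_j},
\]
which is legitimate since $-y\zeta-\eta_j=-y(\zeta+\eta_j/y)$. Each $v_j$ is unimodular, because $|v_j|=|w_j|$. Then
\[
N_y(\zeta)-(-y)^{|\cI_-|}=(-y)^{|\cI_-|}\Bigl(\prod_{j\in\cI_-}v_j-1\Bigr),
\]
so it suffices to bound $\bigl|\prod_j v_j-1\bigr|$. Iterating Lemma~\ref{lem: tri ineq}, namely $|ab-1|\le|a-1|+|b-1|$ for unimodular $a,b$, gives by induction
\[
\Bigl|\prod_{j\in\cI_-} v_j-1\Bigr|\le\sum_{j\in\cI_-}|v_j-1|.
\]

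Next we estimate a single term:
\[
v_j-1=\frac{\eta_j/y+\eta_j}{\zeta-\eta_j}=\frac{\eta_j(1+\bar y)}{\zeta-\eta_j},
\]
using $|y|=1$, so $1/y=\bar y$. Then $|1+\bar y|\le 2$, and Lemma~\ref{lem:distance} applies because $|\theta|<\theta_0$ and $\eta_j$ is real; it yields $|\zeta-\eta_j|\ge\delta_0^{1/2}(\rho^2+\eta_j^2)^{1/2}\ge\delta_0^{1/2}|\zeta|$, where $\rho=|\zeta|$. Hence
\[
|v_j-1|\le\frac{2|\eta_j|}{\delta_0^{1/2}|\zeta|}.
\]
Since all $\eta_j$ with $j\in\cI_-$ are negative, $\sum_{j\in\cI_-}|\eta_j|=|S_-|$, and summing gives
\[
\Bigl|\prod_{j\in\cI_-} v_j-1\Bigr|\le\frac{2|S_-|}{\delta_0^{1/2}|\zeta|}.
\]
Combining this with the trivial bound $2$ proves \eqref{eq:phase-extract-bound}.

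The only step needing care is the choice of normalization. A naive comparison of $w_j$ with $-y$ directly would not expose the unimodular structure needed for the telescoping inequality. Factoring out $-y$ makes each $v_j$ unimodular with $v_j\to 1$ as $|\zeta|\to\infty$, which is exactly the situation Lemma~\ref{lem: tri ineq} handles.
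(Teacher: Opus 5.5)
Your proof is correct and follows the same route as the paper. You normalize each unimodular factor by $-y$, telescope with Lemma~\ref{lem: tri ineq}, bound each $|v_j-1|$ via Lemma~\ref{lem:distance}, and fall back on the trivial bound $2$ from unimodularity. Your formula $v_j-1=\eta_j(1+\bar y)/(\zeta-\eta_j)$ is the accurate version of the paper's $(1+y)\eta_j/(\zeta-\eta_j)$; the discrepancy is a harmless typo in the paper, since $|1+y|=|1+\bar y|$ when $|y|=1$.
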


\begin{proof}
Let
$$
G_{j}(\zeta) := (-y)^{-1} \frac{-y\zeta - \eta_j}{\zeta - \eta_j}
= \frac{\zeta + \eta_j y^{-1}}{\zeta - \eta_j} = 1 + \frac{(1 + y) \eta_j }{\zeta - \eta_j}.
$$
By~\eqref{eq: modu eq}, $|G_j(\zeta)| = 1$.
Thus, by Lemma~\ref{lem:distance},
$$
|G_j(\zeta) - 1| = \left| \frac{(1 + y) \eta_j }{\zeta - \eta_j} \right| \le \frac{2|\eta_j|}{\delta_0^{1/2}\sqrt{\zeta^2 + \eta_j^2}} < \frac{2 |\eta_j|}{\delta_0^{1/2} |\zeta|}.
$$
Using Lemma~\ref{lem: tri ineq} repeatedly on $\cI_{-}$,
\begin{equation*}
    \left|\prod_{j\in \Delta_{-}} G_j(\zeta) - 1\right| \le \sum_{j\in \cI_{-}} |G_j(\zeta) - 1| < \frac{2 |S_{-}|}{\delta_0^{1/2} |\zeta|}.
\end{equation*}
On the other hand, since both $N_y(\zeta)$ and $(-y)^{|\cI_{-}|}$ have modulus one, their difference is at most $2$. Hence, the bound of~\eqref{eq:phase-extract-bound} is proved.
\end{proof}
Following the same procedure in the proof of Lemma~\ref{lem:phase-extraction}, we have the following claim for positive interior knots. We omit the proof here.
\begin{corollary}\label{cor: pos}
Let $\cI_{+}:= \{j \mid \eta_j > 0, \; 1\le j\le k-1\}$ be the index set of all positive interior knots, and set $$S_{+} = \sum_{j\in \cI_{+}} \eta_j.$$  For $|y|=1$, $|\theta| < \theta_0$, and $\zeta\in L_\theta$, define
$$
P_y(\zeta):=\prod_{j\in\cI_{+}}\frac{-y\zeta - \eta_j}{\zeta - \eta_j}.
$$
Then
\begin{equation}\label{eq:phase-extract-bound-pos}
\begin{aligned}
|P_y(\zeta)-(-y)^{|\cI_{+}|}|
\le 2 \min\left\{1,\frac{|S_+|}{\delta_0^{1/2}|\zeta|}\right\}.
\end{aligned}
\end{equation}
\end{corollary}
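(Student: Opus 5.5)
The plan is to copy the proof of Lemma~\ref{lem:phase-extraction} word for word, changing only the index set: replace $\cI_-$ by $\cI_+$ and $S_-$ by $S_+$. Nothing in that argument used the sign of the knots. It relied on only three facts: that $\zeta\in L_\theta$, that $\eta_j$ is real, and that $|\theta|<\theta_0$. All three still hold here.

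First, for each $j\in\cI_+$ I define the normalized factor
\[
G_j(\zeta):=(-y)^{-1}\frac{-y\zeta-\eta_j}{\zeta-\eta_j}=1+\frac{(1+y)\eta_j}{\zeta-\eta_j}.
\]
By \eqref{eq: modu eq} this has modulus one, since Lemma~\ref{lem: rot conj} gives $|-y\zeta-\eta_j|=|\bar\zeta-\eta_j|=|\zeta-\eta_j|$ for real $\eta_j$. One point deserves care. When $\eta_j>0$, the denominator $\zeta-\eta_j$ could vanish only if $\zeta=\eta_j$. But $L_\theta$ meets the real axis only at the origin, and Lemma~\ref{lem:distance} in fact gives $|\zeta-\eta_j|\ge\delta_0^{1/2}(|\zeta|^2+\eta_j^2)^{1/2}>0$. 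So every $G_j$ is well defined on $L_\theta$.

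Next, combining $|1+y|\le 2$ with Lemma~\ref{lem:distance} gives
\[
|G_j(\zeta)-1|\le \frac{2\eta_j}{\delta_0^{1/2}|\zeta|}.
\]
Applying Lemma~\ref{lem: tri ineq} inductively to the unimodular numbers $G_j$, I get
\[
\Bigl|\prod_{j\in\cI_+}G_j-1\Bigr|\le\sum_{j\in\cI_+}|G_j-1|\le \frac{2S_+}{\delta_0^{1/2}|\zeta|}.
\]
Multiplying through by the unimodular constant $(-y)^{|\cI_+|}$ does not change the modulus, so this bound becomes the second term of the minimum in \eqref{eq:phase-extract-bound-pos}. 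Since $S_+>0$, we have $|S_+|=S_+$.

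Finally, $P_y(\zeta)$ and $(-y)^{|\cI_+|}$ both have modulus one, so their difference is at most $2$; this gives the first term of the minimum. If $\cI_+$ is empty, the product is $1=(-y)^0$ and the bound holds trivially.

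I do not expect any real obstacle. The only step that needs a moment's thought is the well-definedness check in the first paragraph, namely that $L_\theta$ avoids the positive knots, and Lemma~\ref{lem:distance} already settles it.
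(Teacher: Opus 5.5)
Your proposal is correct and is the paper's argument: the paper omits this proof, saying only that it follows the proof of Lemma~\ref{lem:phase-extraction} with $\cI_-$, $S_-$ replaced by $\cI_+$, $S_+$, and your extra check that $L_\theta$ avoids the positive knots is a welcome detail. There is one harmless slip, copied from that lemma: $G_j(\zeta)=1+\frac{(1+y^{-1})\eta_j}{\zeta-\eta_j}$ rather than $1+\frac{(1+y)\eta_j}{\zeta-\eta_j}$, which changes nothing since $|1+y^{-1}|=|1+y|\le 2$ when $|y|=1$.
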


\subsection{One-sided bound}
In this part, we consider an extreme case in which all \textit{interior} knots lie on one side of the origin (including the origin).
\begin{lemma}\label{lem: one-sided}
    If $\eta_j\ge 0$ for all $1\le  j\le k-1$, then 
    \begin{equation*}
        |U(y)|\le 1, \qquad |y| = 1.
    \end{equation*}
    The same claim holds if $\eta_j\le 0$ for all $1\le  j\le k-1$.
\end{lemma}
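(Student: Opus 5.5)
The plan is to compute $U(y)$ exactly in the one-sided case. When all interior knots lie on one side of the origin, the divided difference in Lemma~\ref{lem: U} collapses to a single residue. That residue is a ratio of two products whose moduli can be compared factor by factor.

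\textbf{Case $\eta_j\le 0$ for $1\le j\le k-1$.} Here the only positive node is $\eta_k$. Take $g(z)=\Omst(-yz)$, so $\deg g\le k-1$, and apply \eqref{eq:U-contour} with $\Gamma_+$ a small circle around $\eta_k$. Since $\Om'(\eta_k)=(\eta_k-\eta_0)\Omst(\eta_k)$, the residue theorem gives
\begin{equation*}
U(y)=-(\eta_k-\eta_0)\frac{\Omst(-y\eta_k)}{\Om'(\eta_k)}=-\frac{\Omst(-y\eta_k)}{\Omst(\eta_k)}
=-\prod_{j=1}^{k-1}\frac{-y\eta_k-\eta_j}{\eta_k-\eta_j}.
\end{equation*}
For each factor, $\eta_k>0$ and $\eta_j\le 0$ give $|\eta_k-\eta_j|=\eta_k+|\eta_j|$. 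The triangle inequality with $|y|=1$ gives $|-y\eta_k-\eta_j|\le \eta_k+|\eta_j|$. Hence every factor has modulus at most $1$, so $|U(y)|\le 1$.

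\textbf{Case $\eta_j\ge 0$ for $1\le j\le k-1$.} Use the complementary contour. Since $\deg g\le k-1<k$, we have $[\eta_0,\ldots,\eta_k]g=0$. Therefore
\begin{equation*}
[\eta_0,\ldots,\eta_k](g\one_{z>0})=-[\eta_0,\ldots,\eta_k](g\one_{z\le 0}).
\end{equation*}
The argument of Lemma~\ref{lem:contour} applies verbatim to this second expression, with a contour enclosing only the nonpositive nodes, namely $\eta_0$ and possibly one interior knot equal to $0$.

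The possible zero knot contributes nothing. If $\eta_j=0$ for some $1\le j\le k-1$, then $g(\zeta)=\Omst(-y\zeta)$ vanishes at $\zeta=0$, which cancels the simple pole of $1/\Om$ there. This is the same removable-singularity remark as in Lemma~\ref{lem: U line int}; the knots are distinct, so there is at most one such knot.

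Only the residue at $\eta_0$ remains. Using $\Om'(\eta_0)=(\eta_0-\eta_k)\Omst(\eta_0)$ and keeping track of signs,
\begin{equation*}
U(y)=(\eta_k-\eta_0)\frac{\Omst(-y\eta_0)}{\Om'(\eta_0)}=-\prod_{j=1}^{k-1}\frac{-y\eta_0-\eta_j}{\eta_0-\eta_j}.
\end{equation*}
Now $\eta_0<0\le\eta_j$ gives $|\eta_0-\eta_j|=|\eta_0|+\eta_j\ge|-y\eta_0-\eta_j|$. Each factor again has modulus at most $1$, so $|U(y)|\le 1$.

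The only delicate point is the bookkeeping of boundary cases: the sign conventions, the complementary contour, and the treatment of a knot at $0$. The convention $\one_{z>0}$ makes $0$ a nonpositive node, so in the first case the key step is checking that its residue vanishes. This is exactly the removable-singularity observation above. The modulus estimate itself is just the triangle inequality.
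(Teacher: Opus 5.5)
Your proof is correct and is essentially the paper's argument. In the one-sided configuration the divided difference in \eqref{eq:U-def} reduces to the contribution of the single node $\eta_*\in\{\eta_0,\eta_k\}$ lying on the opposite side of the origin, which gives, up to an overall sign that is irrelevant for $|U|$, $U(y)=\prod_{j=1}^{k-1}\frac{-y\eta_*-\eta_j}{\eta_*-\eta_j}$, and each factor is bounded by $1$ via the triangle inequality. The paper gets this closed form from the Newton form $Q_y(z)=\Omst(-yz)+c_y\prod_{j=1}^{k}(z-\eta_j)$ of the interpolant, where a zero interior knot is harmless because $\Omst(0)=0$; you read it off as a residue via Lemma~\ref{lem:contour} and its complementary-contour version, and you also write out the case $\eta_j\le 0$ that the paper leaves to the reader.
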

\begin{proof}
    Let the nodes be $\eta_0 < 0 \le \eta_1 \le \ldots \le \eta_{k-1} \le \eta_k$. We compute $U(y)$ through~\eqref{eq:U-def}, which is the product of $(\eta_0 - \eta_k)$ with the leading coefficient of the interpolation polynoimal for $\Omega_{\ast}(-yz)\one_{z > 0}$ at the knots. 

    Because the interpolation data $\Omega_{\ast}(-yz) 1_{z > 0}$ agrees with $\Omega_{\ast}(-yz)$ at all non-negative nodes, and vanishes at $\eta_0$, the interpolation polynomial can be computed by Newton's divided difference 
    \begin{equation*}
        Q_y(z) = \Omega_{\ast}(-yz) + c_y  \prod_{j=1}^k (z - \eta_j),
    \end{equation*}
    where the leading coefficient $c_y$ is derived using $Q_y(\eta_0) = 0$,
    \begin{equation*}
        c_y = - \frac{\Omega_{\ast}(-\eta_0 y)}{\prod_{j=1}^k (\eta_0 - \eta_j)}.
    \end{equation*}
    Hence, 
    \begin{equation*}
        U(y) = \prod_{j=1}^{k-1} \frac{-\eta_0 y - \eta_j}{\eta_0 - \eta_j } = \prod_{j=1}^{k-1} \frac{ \eta_j + \eta_0 y}{\eta_j - \eta_0 }.
    \end{equation*}
    Since each factor's modulus is bounded by one, $|U(y)|\le 1$. The same process applies when all interior knots are non-positive.
\end{proof}
\section{Main estimates}\label{sec: est}
In this section, we aim to bound $|U(e^{i\theta})|$ using~\eqref{eq: U mod} inside the window $|\theta| < \theta_0$. We will need the following key inequality.
\begin{lemma}
\label{lem:two-interval}
Assume $0 < A\le B$, $S\le kA$, and $k\ge2$.  Then
\begin{equation}
    \label{eq:two-interval-estimate}
    \begin{aligned}
 &(A+B)
 \int_0^\infty
 \min\left\{1,\frac{S}{\delta_0^{1/2}\rho}\right\}
 \frac{\dd\rho}{(\rho^2+A^2)^{1/2}(\rho^2+B^2)^{1/2}}  \\
 &\hspace{.7in}\le
2\left( \sinh^{-1}(1)+ 1
 + \log\frac{k}{\sqrt{\delta_0}}\right).    
    \end{aligned}
\end{equation}
The estimate also holds when $B\le A$, with $S\le kB$ by symmetry.
\end{lemma}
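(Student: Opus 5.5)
Set $\rho_0 := S/\delta_0^{1/2}$ and split the integral at $\rho=\rho_0$. On $[0,\rho_0]$ the minimum equals $1$; on $[\rho_0,\infty)$ it equals $\rho_0/\rho$. Throughout we use $0<A\le B$, which gives $A+B\le 2B$, $(\rho^2+B^2)^{1/2}\ge B$ and $(\rho^2+B^2)^{1/2}\ge \rho$. If $\rho_0\le A$ we simply replace $\rho_0$ by $A$ as the splitting point. This is allowed because the integrand only increases when $\min\{1,\cdot\}$ is replaced by $1$ on $[0,A]$ and by the bound $\rho_0/\rho\le A/\rho$ beyond. So we may assume the splitting point is $\rho_1:=\max\{\rho_0,A\}$, with the tail using $\min\le \rho_1/\rho$.

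\textbf{Near part.} For $\int_0^{\rho_1}$ we bound $(A+B)/(\rho^2+B^2)^{1/2}\le 2B/B=2$. This leaves
$$2\int_0^{\rho_1}\frac{\dd\rho}{(\rho^2+A^2)^{1/2}}=2\sinh^{-1}(\rho_1/A).$$
If $\rho_1=A$, this is $2\sinh^{-1}(1)$. Otherwise $\rho_1/A=S/(\delta_0^{1/2}A)\le k/\sqrt{\delta_0}$ by the hypothesis $S\le kA$. Then
$$\sinh^{-1}(u)=\log\bigl(u+\sqrt{u^2+1}\bigr)\le \log(2u+1),$$
and it remains to absorb this into $\sinh^{-1}(1)+\log(k/\sqrt{\delta_0})$. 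Since $u\ge1$, we have $\log(u+\sqrt{u^2+1})\le \log u+\log(1+\sqrt2)=\log u+\sinh^{-1}(1)$. So the near part is at most $2(\sinh^{-1}(1)+\log(k/\sqrt{\delta_0}))$.

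\textbf{Tail part.} For $\int_{\rho_1}^\infty$ we use $(\rho^2+A^2)^{1/2}\ge\rho$ and $A+B\le 2B$. This gives
$$2\rho_1\int_{\rho_1}^\infty \frac{B\,\dd\rho}{\rho^2(\rho^2+B^2)^{1/2}}\le 2\rho_1\int_{\rho_1}^\infty\frac{\dd\rho}{\rho^2}=2,$$
where the middle step uses $B/(\rho^2+B^2)^{1/2}\le 1$. Adding the two parts yields exactly the right side $2(\sinh^{-1}(1)+1+\log(k/\sqrt{\delta_0}))$.

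The only delicate point is the bookkeeping when $\rho_0<A$. In that case the near part must still be bounded by $2\sinh^{-1}(1)$ rather than by a negative logarithm, which the choice of splitting point $\rho_1=\max\{\rho_0,A\}$ handles. We also need $\log(k/\sqrt{\delta_0})\ge0$, which holds since $k\ge2$ and $\delta_0\le1$. The case $B\le A$ with $S\le kB$ follows by swapping the roles of $A$ and $B$, since the integrand is symmetric in $A$ and $B$.
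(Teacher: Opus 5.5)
Your proof is correct and is essentially the paper's argument. The paper always splits at $\rho=A$ and picks up the logarithm from $\int_A^{\rho_0}\dd\rho/\rho=\log(\rho_0/A)$ on the middle range. You instead split at $\max\{\rho_0,A\}$ and pick it up from $\sinh^{-1}(\rho_0/A)\le\sinh^{-1}(1)+\log(\rho_0/A)$, using the same ingredients otherwise ($(A+B)/B\le 2$, $(\rho^2+A^2)^{1/2}\ge\rho$, $S\le kA$), and you arrive at the same constant.
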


\begin{proof}
We split the integral at $\rho=A$.  On $[0, A]$, use the bound by one in the minimum and $(\rho^2+B^2)^{1/2}\ge B$.  Since $(A+B)/B\le2$, the contribution is at most
$$
       \frac{ (A+B) }{B}
        \int_0^A\frac{\dd\rho}{(\rho^2+A^2)^{1/2}}
        \le 2\sinh^{-1}(1) = 2\log(1+\sqrt{2}).
$$
On $[A,\infty)$, use $(\rho^2+A^2)^{1/2}\ge\rho$ and $(\rho^2+B^2)^{1/2}\ge B$.  With $R=S/\sqrt{\delta_0}$, the contribution is at most
$$
        \frac{A+B}{B}
        \int_A^\infty \min\left\{1,\frac{R}{\rho}\right\}\frac{\dd\rho}{\rho} \le 2 (1 + \log_{+}(R/A)).
$$
where $\log_{+}(x) = \max(0, \log(x))$.  Since $S\le kA$,
$$
        1+\log_+(R/A)
        \le 1+\log\frac{k}{\sqrt{\delta_0}}.
$$
Combining the two intervals gives \eqref{eq:two-interval-estimate}.
\end{proof}

\begin{proposition}\label{prop:saddle-bound}
For every $k\ge2$ and every $|\theta| < \theta_0$, we have
\begin{equation}\label{eq:saddle-bound-explicit}
        |U(e^{i\theta})|
        \le \alpha \log k + \beta,
\end{equation}
where $\delta_0 = 1 - \sin(\theta_0/2)$, and
\begin{equation*}
    \alpha = \frac{4}{\pi\delta_0},\qquad \beta = 1 + \frac{4\log(1+\sqrt{2}) + 4}{\pi \delta_0} - \frac{2\log\delta_0}{\pi\delta_0}.
\end{equation*}
\end{proposition}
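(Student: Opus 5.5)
The plan is to start from the representation \eqref{eq: U mod} of Lemma~\ref{lem: U line int}. We split $\Phi_\theta$ into a "one-sided'' part, handled exactly by Lemma~\ref{lem: one-sided}, and a remainder, handled by phase extraction together with Lemma~\ref{lem:two-interval}. Set $y=e^{i\theta}$, $A=|\eta_0|$ and $B=\eta_k$.

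\emph{Choice of side.} Suppose first that $A\le B$. Every negative interior knot satisfies $\eta_0\le\eta_j<0$, so $|S_-|\le (k-1)A\le kA$, which is exactly the hypothesis of Lemma~\ref{lem:two-interval}. In this case we extract the negative knots. If instead $B\le A$, we use Corollary~\ref{cor: pos} with $S_+\le kB$ and extract the positive knots symmetrically. Below we describe only the first case.

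\emph{Splitting.} Factor $\Phi_\theta=N_yP_y$ and write
$$\Phi_\theta=(-y)^{|\cI_-|}P_y+\bigl(N_y-(-y)^{|\cI_-|}\bigr)P_y .$$
For the first term, we will argue that
$$-\frac{\eta_k-\eta_0}{2\pi i}\int_{L_\theta}\frac{P_y(\zeta)}{(\zeta-\eta_k)(\zeta-\eta_0)}\,d\zeta$$
is precisely the value at $y$ of the generating polynomial $U$ for the reduced knot sequence consisting of $\eta_0$, the positive (and zero) interior knots, and $\eta_k$. The reason is that the derivation of Lemma~\ref{lem: U line int} uses only that the integrand decays like $O(|\zeta|^{-2})$ and that there are at most $k-1$ interior knots. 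The argument through Lemmas~\ref{lem: U} and~\ref{lem:contour} runs verbatim with a smaller order. The reduced configuration is one-sided, so Lemma~\ref{lem: one-sided} bounds this contribution by $1$; this gives the leading $1$ in $\beta$. This identification is the step I expect to need the most care, since Lemma~\ref{lem: one-sided} is stated for the full index range and must be re-read for the reduced order.

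\emph{Remainder.} Since $|P_y|=1$ on $L_\theta$, Lemma~\ref{lem:phase-extraction} gives
$$\bigl|\bigl(N_y-(-y)^{|\cI_-|}\bigr)P_y\bigr|\le 2\min\{1,|S_-|/(\delta_0^{1/2}|\zeta|)\}.$$
Parametrize $L_\theta$ by $\zeta=\rho e^{-i(\pi+\theta)/2}$ with $\rho\in\bbR$, so $|d\zeta|=d\rho$. Lemma~\ref{lem:distance} gives
$$|\zeta-\eta_k|\,|\zeta-\eta_0|\ge \delta_0(\rho^2+B^2)^{1/2}(\rho^2+A^2)^{1/2}.$$
The resulting integrand is even in $\rho$, so the line integral is twice the half-line integral. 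Hence the remainder is at most
$$\frac{(A+B)}{2\pi}\cdot 2\cdot\frac{2}{\delta_0}\int_0^\infty \min\Bigl\{1,\frac{|S_-|}{\delta_0^{1/2}\rho}\Bigr\}\frac{d\rho}{(\rho^2+A^2)^{1/2}(\rho^2+B^2)^{1/2}} .$$
Applying Lemma~\ref{lem:two-interval}, this is bounded by
$$\frac{4}{\pi\delta_0}\Bigl(\log(1+\sqrt2)+1+\log k-\tfrac12\log\delta_0\Bigr).$$
Adding the $1$ from the one-sided term yields exactly $\alpha\log k+\beta$ with the stated constants.
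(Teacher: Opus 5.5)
Your proposal is correct and follows the paper's proof essentially step for step. You use the same choice of side according to whether $A\le B$, the same split $\Phi_\theta=(-y)^{|\cI_-|}P_y+(N_y-(-y)^{|\cI_-|})P_y$, Lemma~\ref{lem: one-sided} for the main term, and Lemmas~\ref{lem:phase-extraction}, \ref{lem:distance} and \ref{lem:two-interval} for the remainder, which gives exactly $\alpha\log k+\beta$. Two points are slightly more careful than the paper: you identify the main term explicitly as $U$ for the reduced, lower-order knot sequence, where a zero knot must be absorbed into $P_y$ and contributes only the unimodular factor $-y$; and you keep $\delta_0^{1/2}$ inside the minimum consistently, whereas the paper's displayed integrand drops it.
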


\begin{proof}
Let $A = -\eta_0 > 0$ and $B = \eta_k > 0$. Assume first $A\le B$.  For $\zeta \in L_\theta$, write
$$
        \frac{\Omst(-y\zeta)}{\Omst(\zeta)}= N_y(\zeta)P_y(\zeta) (-y)^Z,
$$
where $N_y$ is the product over negative interior knots $\cI_{-}$ (see Lemma~\ref{lem:phase-extraction}), $P_y$ is the product over positive interior knots $\cI_{+}$ (see Corollary~\ref{cor: pos}), and $Z$ is the number of knots at the origin. Lemma~\ref{lem:phase-extraction} gives
$$
        N_y=(-y)^{|\cI_{-}|}+E_y,
        \qquad
        |E_y(\zeta)|\le 2\min\left\{1,\frac{|S_-|}{|\zeta|}\right\}.
$$
We obtain
\begin{equation}\label{eq: two term}
    \frac{\Omst(-y\zeta)}{\Omst(\zeta)} = P_y(\zeta) (-y)^{|\cI_{-}|} (-y)^Z + P_y(\zeta) E_y(\zeta) (-y)^Z.  
\end{equation}
Then, we substitute into \eqref{eq: U mod}.
The contribution of the first term from~\eqref{eq: two term} is the one-sided configuration with no negative interior nodes.  By Lemma~\ref{lem: one-sided}, this term's contribution in~\eqref{eq: U mod} has modulus at most $1$.  Since $|P_y|=1$ on $L_\theta$ and $|y| = 1$. Using Lemma~\ref{lem:distance}, the second term from~\eqref{eq: two term} gives an error estimate of at most
$$
2\cdot \frac{A + B}{2\pi \delta_0}\int_0^\infty
 \min\left\{2,\frac{2|S_-|}{\rho}\right\}
 \frac{\dd\rho}{(\rho^2+A^2)^{1/2}(\rho^2+B^2)^{1/2}}.
$$
The factor of $2$ in front is because $L_{\theta}$ is split into two half-lines. Since all negative interior knots $|\eta_j|\le A$ and there are at most $k-1$ such nodes, thus
$$
        S_-\le kA.
$$
By Lemma~\ref{lem:two-interval},  the error is bounded by
$$
\frac{4}{\pi\delta_0}\left( \sinh^{-1}(1)+ 1
 + \log\frac{k}{\sqrt{\delta_0}}\right). 
$$
If $B\le A$, the same argument is applied with $S_{+} \le k B$. Therefore, the bound~\eqref{eq:saddle-bound-explicit} is proved inside the window.
\end{proof}
This improves the earlier bound in Lemma~\ref{lem: easy bound} from $|U(e^{i\theta})| \le k$ to $O(\log k)$ scale for $|\theta|\le \theta_0$, which can significantly improve the estimate of $\Sigma_k$ using~\eqref{eq:binomial-Fourier}. 

\section{Proof of the main theorem}\label{sec: proof}

\begin{proof}[Proof of Theorem~\ref{thm:main}]
Using Proposition~\ref{prop:saddle-bound} and Lemma~\ref{lem: easy bound} on the central window $|\theta|\le \theta_0$ and its complement, respectively. We obtain

$$
 \frac1{2\pi}\int_{-\pi}^{\pi}
 |U(e^{i\theta})|\,|1+e^{-i\theta}|^k\dd\theta
 \le B_k (\alpha \log k + \beta) + k(A_k - B_k),
$$
where $\alpha, \beta$ are from Proposition~\ref{prop:saddle-bound}, and $$A_k = \frac{1}{2\pi}\int_{-\pi}^{\pi} |1 + e^{-i\theta}|^k d\theta = 2^k\frac{\Gamma((k+1)/2)}{\sqrt\pi\,\Gamma(k/2+1)},\qquad B_k = \frac{1}{2\pi}\int_{|\theta|\le \theta_0} |1 + e^{-i\theta}|^k d\theta.$$
We take $\theta_0 = \frac{\pi}{3}$, $\delta_0 = \frac{1}{2}$. Then $\alpha = \frac{8}{\pi} < 2.55$, $\beta = 1 + \frac{4\log(2)}{\pi} + \frac{8 + 8 \log(1+\sqrt{2})}{\pi} < 6.7$, and
\begin{equation*}
\begin{aligned}
    A_k - B_k &= \frac{2^{k+1}}{\pi} \int_{\theta_0}^{\pi/2} \cos^{k}(t) dt \\&< \frac{2^{k+1}}{\pi \sin(\theta_0/2)} \int_{\theta_0/2}^{\pi/2} \cos^{k}(t) \sin(t) dt = \frac{2^{k+1}}{\pi (k+1) \sin(\theta_0/2)} \cos^{k+1}(\theta_0/2)\\
    & =  2^{k} \frac{4}{\pi (k+1) } \left(\frac{\sqrt{3}}{2}\right)^{k+1}.
\end{aligned}
\end{equation*}
And $\frac{4}{\pi}\left(\frac{\sqrt{3}}{2}\right)^{k+1}\sqrt{k} < \frac{8}{\pi} < 2.55$ for all $k\ge 2$.
 Therefore, using $B_k < A_k$, we obtain
\begin{equation*}
\begin{aligned}
    \Sigma_k &< 2^{k+1/2}\frac{1}{\sqrt{\pi k}} (2.55 \log k + 6.7) + 2^k k^{-1/2} \cdot 2.55\\
    &< 3 \cdot 2^k  k^{-1/2} (\log k + 3).
\end{aligned}
\end{equation*}
Hence, by Theorem~\ref{thm: SS}, we obtain the upper bound 
\begin{equation*}
    \kappa_{k, p}\le 3 \cdot 2^k  k^{1/2} (\log k + 3),
\end{equation*}
which improves from $k 2^k$ to $O(\sqrt{k}\log k\,2^k)$. 

Moreover, if we choose $\theta_0 = O(k^{-1/2}\sqrt{\log k})$ as $k\to\infty$, we will get $\delta_0 \to 1$ and the contribution outside the window is still  negligible. Then the constant in front of $k^{1/2}\log k\,2^k$ can be optimzed to $\frac{4\sqrt{2}}{\pi^{3/2}} \approx 1.016$.
\end{proof}

\section{Concluding Remarks}\label{sec: con}
This note improves the previous upper bound of the B-spline condition number. The main technique is to treat the binomial sum~\eqref{eq: bi sum sigma} as an integral over the generating polynomial $U$ and reduce the problem to estimating $|U|$. However, as outlined in~\cite{SchererShadrin1999}, this approach cannot overcome the final $\sqrt{k}$ factor to attain a proof for DeBoor's conjecture since the binomial term $\binom{k}{\lfloor k/2 \rfloor}$ is already at $O(2^k/\sqrt{k})$. However, it is possible to further reduce the logarithmic factor $\log k$ from Theorem~\ref{thm:main}. When considering the contribution of the error term in~\eqref{eq: two term}, the possibility of phase cancellation was overlooked. 

\section*{Acknowledgments}
The author would like to thank Kui Ren for suggestions and comments. The author was partially supported by NSF grant DMS-2309530. 

\bibliographystyle{siam}
\bibliography{main}

\end{document}

%% file: preamble.tex
\pdfoutput=1
\usepackage{graphicx}
\graphicspath{{./Figures/}}
\usepackage{subfigure}

\usepackage{amssymb,latexsym}

\usepackage{setspace}

\usepackage{amsmath}

\usepackage{mathrsfs,amsfonts}

\usepackage{amsthm,amsxtra}

\usepackage[final]{showkeys}

\usepackage{stmaryrd}

\usepackage{algorithm}
\usepackage{algorithmicx}
\usepackage{algpseudocode}
\usepackage{mathtools}
\newif\ifPDF
\ifx\pdfoutput\undefined
\PDFfalse
\else
\ifnum\pdfoutput > 0
\PDFtrue
\else
\PDFfalse
\fi
\fi

\ifPDF
\usepackage{pdftricks}
\begin{psinputs}
	\usepackage{pstricks}
	\usepackage{pstcol}
	\usepackage{pst-plot}
	\usepackage{pst-tree}
	\usepackage{pst-eps}
	\usepackage{multido}
	\usepackage{pst-node}
	\usepackage{pst-eps}
\end{psinputs}
\else
\usepackage{pstricks}
\fi

\ifPDF
\usepackage[debug,pdftex,colorlinks=true, 
linkcolor=blue, bookmarksopen=false,
plainpages=false,pdfpagelabels]{hyperref}
\else
\usepackage[dvips]{hyperref}
\fi

\usepackage[openbib]{currvita}

\usepackage{fancyhdr}

\newtheorem{theorem}{Theorem}[section]
\newtheorem{lemma}[theorem]{Lemma}

\newtheorem{proposition}[theorem]{Proposition} 
 
\newtheorem{corollary}[theorem]{Corollary}

\newcommand{\bbC}{\mathbb C}

\newcommand{\bbR}{\mathbb R}

\newcommand{\cI}{\mathcal I}

\newenvironment{keywords}
{\noindent{\bf Key words.}\small}{\par\vspace{1ex}}
\newenvironment{AMS}
{\noindent{\bf AMS subject classifications 2000.}\small}{\par}

\makeatletter
\newcommand{\chapterauthor}[1]{%
	{\parindent0pt\vspace*{-25pt}%
		\linespread{1.1}\large\scshape#1%
		\par\nobreak\vspace*{35pt}}
	\@afterheading%
}
\makeatother